\theoremstyle{definition}
\def\be{\begin{eqnarray}}
\def\ee{\end{eqnarray}}
\def\matZ{{\mathbb{Z}}}
\def\matR{{\mathbb{R}}}
\def\matQ{{\mathbb{Q}}}
\def\matC{{\mathbb{C}}}
\def\pr{\mathsf{pr}}
\newcommand{\Stab}{\mathrm{Stab}}
\newcommand{\bA}{\mathsf{A}}
\newcommand{\bT}{\mathsf{T}}
\newcommand{\bK}{\mathsf{K}}
\newcommand{\Lie}{\mathrm{Lie}}
\newcommand{\Pic}{\mathrm{Pic}}
\newcommand{\cL}{\mathscr{L}}
\let\bs\boldsymbol
\def\zz{{\bs z}}
\def\nn{{\bs \nu}}
\def\aa{{\bs a}}
\def\sl{{\bs s}}
\def\wall{\mathsf{w}}
\def\ind{\mathrm{ind}}
\def\Ell{\mathrm{Ell}}
\theoremstyle{definition}
\newtheorem{Definition}{Definition}
\newtheorem{Proposition}{Proposition}
\newtheorem{Lemma}{Lemma}
\newtheorem{Corollary}{Corollary}
\newtheorem{Theorem}{Theorem}
\newtheorem{Remark}{Remark}
\newtheorem{Example}{Example}
\newcommand{\fC}{\mathfrak{C}} 
\newcommand{\fD}{\mathfrak{D}} 
\def\X{\mathsf{X}} 
\def\Y{\mathsf{Y}} 
\def\ms{\Phi} 
\def\mf{\frak{m}} 
\def\hh{\mathsf{H}} 
\newcommand{\somespecialrotate}[3][]{%
\begingroup
\sbox\@tempboxa{#3}%
\@tempdima=.5\wd\@tempboxa
\sbox\@tempboxa{\rotatebox[#1]{#2}{\usebox\@tempboxa}}%
\advance\@tempdima by -.5\wd\@tempboxa
\mbox{\hskip\@tempdima\usebox\@tempboxa}%
\endgroup}
\def\zz{{\bs z}}
\begin{document}
\title{Pursuing quantum difference equations II:
$3D$-mirror symmetry}
\author{Yakov Kononov, Andrey Smirnov}
\date{}
\maketitle
\thispagestyle{empty}
	
\begin{abstract}
Let  $\X$ and $\X^{!}$ be a pair of symplectic varieties dual with respect to $3D$-mirror symmetry.  The $K$-theoretic limit of the elliptic duality interface is an equivariant $K$-theory class $\mf \in K(\X\times \X^{!})$. We show that this class provides correspondences
$$
\ms_{\mf} : K(\X) \leftrightarrows K(\X^{!})
$$
mapping the $K$-theoretic stable envelopes to the $K$-theoretic stable envelopes. This construction allows us to extend the action of various representation theoretic objects on $K(\X)$, such as action of quantum groups, quantum Weyl groups, $R$-matrices etc., to their action on $K(\X^{!})$. In particular, we relate the wall $R$-matrices of $\X$ to the $R$-matrices of the dual variety $\X^{!}$. 

As an example, we apply our results to  $\X=\mathrm{Hilb}^{n}(\matC^2)$ -- the Hilbert scheme  of $n$ points in the complex plane. In this case we arrive at
the conjectures of E.Gorsky and A.Negut  from \cite{NegGor}.
\end{abstract}

\setcounter{tocdepth}{2}
	
	
\section{Introduction}	

\subsection{}


A class of quantum field theories known as 
{\it ``$\mathcal{N}=4$ three-dimensional SUSY theories''} has been recently attracting growing attention in theoretical physics. These theories have proven to be very rich in properties and, more importantly for our purposes, they are intimately connected with geometric representation theory.

The low energy behaviour of $3D$-theories is governed by the moduli spaces of vacua which, from the mathematical standpoint, are certain singular symplectic varieties. The examples include Nakajima quiver varieties,  slices in affine Grassmannians, Hilbert schemes of points and moduli spaces of sheaves on surfaces - the objects of central importance in contemporaty geometric representation theory.

An important feature of $3D$-theories is the existence of a duality called {\it $3D$-mirror symmetry}. Informally speaking, for a $3D$-theory $\mathcal{T}$ this duality assigns a ``mirror'' theory $\mathcal{T}^{\,!}$ which has the same correlation functions as~$\mathcal{T}$. One could say that $\mathcal{T}$ and $\mathcal{T}^{\,!}$ are two ``languages'' describing the same phenomena.  

In the low energy approximation, the $3D$-mirror symmetry relates the corresponding vacua moduli spaces:
$$
3D\textrm{\it -mirror symmetry} : \ \ \X \leftrightarrows \X^{!}
$$
It is expected that enumerative and topological invariants of the symplectic varieties  $\X$ and $\X^{!}$ are connected in a nontrivial way. In this paper we study these connections  at the level of  equivariant elliptic cohomology and  $K$-theory.

\subsection{}
 
In algebraic geometry $3D$-mirror symmetry is governed by a certain class $\mf$ in the equivariant elliptic cohomology of $\,\X\times \X^{!}$,  which is known as the {\it duality interface}\footnote{This class was called ``the mother function'' in \cite{RSVZ1}}. The duality interface provides the kernel for the elliptic version of ``Fourier-Mukai transform'', which maps the enumerative invariants of $\X$ to those of $\X^{!}$.   Schematically, the partition function $\mathcal{Z}$ of a $3D$-theory $\mathcal{T}$ and the partition function $\mathcal{Z}^{!}$ of the dual $3D$-theory are related by the Fourier transform 
\be \label{duality}
\mathcal{Z}^{!} = \int\limits_{\X} \, \mf\, \mathcal{Z}
\ee
The partition functions in this case can be defined in a mathematically rigorous way as the generating functions for certain equivariant count of rational curves in $\X$ known as {\it vertex functions}. We refer to \cite{OkTalk,AOElliptic} for the definition of vertex functions and for precise meaning of (\ref{duality}).

\subsection{} 


In this paper we are mainly concerned with the $K$-theory limit of the duality interface. In this limit, the duality interface degenerates to an equivariant $K$-theory class
$$
\mf_{0} \in K(\X\times \X^{!})
$$
which provides correspondences 
\be \label{ms}
\ms_{\mf_0}: K(\X) \rightarrow K(\X^{!}), \ \ \     \ms^{t}_{\mf_0}: K(\X^{!}) \rightarrow K(\X)
\ee
defined by 
$$
\ms_{\mf_{0}}: c \mapsto    \pr_{\X^{!},*}(\mf_{0}\otimes  \pr_{\X}^{*}(c)), \ \ \ \ms_{\mf_0}^{t}: c \mapsto    \pr_{\X,*}(\mf_{0}\otimes  \pr_{\X^{!}}^{*}(c)),
$$
where $\pr_{\X}$ and $\pr_{\X^{!}}$ denote the canonical projectors 
\be \label{canprog}
\X \xleftarrow{ \ \ \pr_{\X} \ \ }\X\times \X^{!} \xrightarrow{\ \ \pr_{\X^{!}}\ \ }  \X^{!}.
\ee
We show that the correspondences $\ms_{\mf_{0}}$ and $\ms_{\mf_0}^{t}$ {\textit{map the $K$-theoretic stable envelope classes of $\X$ to the $K$-theoretic stable envelope classes of $\X^{!}$}} and vice versa. We will refer to this property as {\it factorization} of $\ms_{\mf_0}$.  

We recall that the $K$-theoretic stable envelope classes of a variety $\X$ is a certain distinguished basis in equivariant $K$-theory $K(\X)$, see \cite{pcmilect,OS} for definitions.

\subsection{} 

As we discuss in the next section, a part of the $3D$-mirror symmetry data is the identification  of the torus fixed points:
\be \label{commonfp}
\textsf{FP}:=\X^{\bT}\cong (\X^{!})^{\bT^{!}}
\ee
where $\bT$ and $\bT^{!}$ denote algebraic tori acting on the dual varieties.
This identification arises when the set $\textsf{FP}$ is finite,
which is the case considered in this paper.

The $K$-theoretic stable envelopes of $\X$ and $\X^{!}$ can be defined as certain classes~\cite{pcmilect}:
$$
\Stab^{\X} \in K(\textsf{FP} \times \X ), \ \ \ \Stab^{\X^{!}} \in K(\textsf{FP} \times \X^{!}).
$$
which may also be viewed as correspondences:
$$
 K(\X) \xleftarrow{ \ \ \Stab^{\X} \ \ } K(\textsf{FP}) \xrightarrow{ \ \ \Stab^{\X^{!}} \ \ } K(\X^{!}).
$$
The factorization of $\ms_{\mf_0}$ then means that
\be \label{factor}
\ms_{\mf_0}=\Stab^{\X^{!}} \circ \Big( \Stab^{\X} \Big)^{t}, \ \ \
\ms^{t}_{\mf_0}=\Stab^{\X}  \circ \Big( \Stab^{\X^{!}} \Big)^{t}.
\ee
which explains the terminology.

\subsection{} 
More generally, we will also consider twisted  $K$-theoretic limits of the duality interface. To an element $\sl \in H^{2}(\X,\matR)$ we will associate a cyclic group $\mu_{\sl}$, acting on $\X^{!}$, and a subvariety $\Y_{\sl} =(\X^{!})^{\mu_{\sl}} \subset \X^{!}$. The twisted $K$-theoretic limit of the duality interface then gives an equivariant $K$-theory class
$$
\frak{m}_{\sl} \in K(\X\times \Y_{\sl}).
$$
Similarly to (\ref{ms}) this class provides correspondences 
\be \label{{K}dual}
\ms_{\mf_\sl}: K_{\bT}(\X) \leftrightarrows K_{\bT^{!}}(\Y_{\sl}),
\ee 
{\it mapping the $K$-theoretic stable envelopes to the $K$-theoretic stable envelopes.}

This construction allows us to extend various representation theoretic objects acting on $K(\X)$, such as $R$-matrices, quantum Weyl and braid groups,  to actions on $K(\Y_{\sl})$ and vice versa.  

\subsection{} 
In $K$-theory the stable envelope bases are determined by a choice of $\sl \in H^{2}(\X,\matR)$ which is called {\it the slope parameter}. It is known that the stable bases change only when $\sl$ crosses hyperplanes from a certain hyperplane arrangement $\textsf{Walls}(\X)\subset H^{2}(\X,\matR)$. The transition matrix $\mathsf{R}^{\X}(\sl)$ between the stable envelope bases on two sides of a wall  is called  {\it the wall $R$-matrix}. 
In Section~\ref{wcsec}, we use $\ms_{\mf_\sl}$ to relate the wall $R$-matrices of $\X$ with those of~$\Y_{\sl}$. We show that, up to a conjugation by a certain explicit diagonal operator, in the basis of common fixed points (\ref{commonfp}) we have an identity
\be \label{rmatrel}
\mathsf{R}^{\X}(\sl) = \prod\limits_{\sl'}  \mathsf{R}^{\Y_{\sl}}(\sl')
\ee
where the right side is the product of the wall $R$-matrices of $\Y_{\sl}$ corresponding to hyperplanes passing through $0\in H^{2}(\Y_{\sl},\matR)$. In other words, the right side is the transition matrix between the stable bases of $K(\Y_{\sl})$ with slopes from ample 
and anti-ample alcoves.

\subsection{} 

The equivariant $K$-theories of varieties appearing as moduli spaces of vacua, are often equipped with natural actions of affine quantum groups
$$
\mathscr{U}_{\hbar}(\widehat{\frak{g}}_{\X}) \curvearrowright\, K(\X).
$$
The theory of the $K$-theoretic stable envelopes is a natural tool to construct and describe this action \cite{pcmilect,OS}.
For instance,   $K$-theoretic stable envelopes  often provide distinguished bases of $K(\X)$ in which the action of quantum groups {\it takes the simplest form}. As an example - the standard and costandard bases of $\mathscr{U}_{\hbar}(\widehat{\frak{gl}}_{n})$-modules  are the incarnations of the stable envelope bases~\cite{Neg}. In this light, the relation between the $K$-theoretic stable envelopes arising from $3D$-mirror symmetry (\ref{{K}dual}) bridges the representation theories of the quantum groups associated with $\X$ and $\Y_{\sl}$. 

In Section \ref{hilbex} we apply this idea to the example $\X=\mathrm{Hilb}^{n}(\matC^2)$ - the Hilbert scheme of $n$ points in the plane. In this case $H^{2}(\X,\matR)\cong \matR$ and interesting phenomena occur at rational points $\sl=\frac{a}{b} \in \matQ$ with $b\leq n$.
The components of
$\Y_{\sl}$ are isomorphic to  Nakajima varieties associated with the cyclic quiver with $b$ vertices, see Fig.\ref{cycl}. The $K$-theories of $\Y_{\sl}$ are  equipped with a natural action of the quantum affine algebra $\mathscr{U}_{\hbar}(\widehat{\frak{gl}}_{b})$. As a   $\mathscr{U}_{\hbar}(\widehat{\frak{gl}}_{b})$-module $K(\Y_{\sl})$ is isomorphic to the so-called Fock representation of level $1$:
$$
 \mathscr{U}_{\hbar}(\widehat{\frak{gl}}_{b}) \curvearrowright\, K(\Y_{\sl}) \cong \mathsf{Fock}
$$
Thus,  $3D$-mirror symmetry  (\ref{{K}dual}) for $\sl=\frac{a}{b}$ provides a certain natural actions of 
$\mathscr{U}_{\hbar}(\widehat{\frak{gl}}_{b})$ on $K$-theories of Hilbert schemes $\mathrm{Hilb}^{n}(\matC^2)$. 

The identity (\ref{rmatrel}) then says that  the transition matrix between the {\it standard} and {\it costandard} bases of the Fock  $\mathscr{U}_{\hbar}(\widehat{\frak{gl}}_{b})$-module is equal to the wall $R$-matrix of $\mathrm{Hilb}^{n}(\matC^2)$ for the wall $\sl=\frac{a}{b}$. In this way we prove the conjectures discussed by E.Gorsky and A.Negut in \cite{NegGor}.

\subsection{} 
The main objective of this series of papers is to use $\ms_{\mf_\sl}$ to gain a better geometric understanding of the {\it wall crossing operators} and the {\it quantum difference equations} discussed in \cite{OS}.  The wall crossing operators, acting on $K(\X)$, provide a geometric version of quantum dynamical Weyl groups~\cite{EV}. 
We expect that $3D$-mirror symmetry exchanges the action of the wall crossing operators on  $K(\X)$ with action of the  dynamical $R$-matrices on  $K(\X^{!})$, which leads us to much deeper understanding of these operators.

Ideas developed here also have direct applications to enumerative geometry, in particular, various limits of {\it vertex functions} investigated in \cite{Dink1,Dink2,dinksmir3,Liu1} is similar to the limits of $\mf$ studied in this paper. 

\section*{Acknowledgements}

\indent
We thank our teacher Andrei Okounkov for introducing us to the
subject discussed in this paper and sharing his ideas.

We are indebted to Andrei Negut for explaining the conjectures of \cite{NegGor}, and helping us to bridge them with  $3D$-mirror symmetry and Eugene Gorsky for reading the preliminary version of the paper and helpful suggestions.

We also thank Noah Arbesfeld, Ivan Cherednik, Ivan Danilenko, Hunter Dinkins,  Boris Feigin, Roman Gonin, Henry Liu, Richard Rimanyi, Alexander Varchenko and Zijun Zhou for discussions and collaborations.

The ideas we develop in this series of papers came from discussions with our colleagues and friends  during the AMS Mathematics Research
Community meeting on Geometric Representation Theory and Equivariant
Elliptic Cohomology at Rhode Island in June 2019 and the workshop “Elliptic
cohomology days” at the University of Illinois, Urbana-Champaign. We are indebted to the organizers and all participants for very fruitful
conversations and creative scientific atmosphere.

A. Smirnov is supported by
the Russian Science Foundation under grant 19-11-00062 and is performed at Steklov Mathematical Institute of Russian Academy of
Sciences.

\section{Data of $3D$-mirror symmetry \label{datasec}}

\subsection{}

Let $\X$ denote a branch of a vacua moduli space of $3D$-theory and $\X^{!}$ the corresponding branch of the dual theory provided by $3D$-mirror symmetry.  As mentioned already, we assume that both $\X$ and $\X^{!}$ are smooth, i.e., we are working with resolutions of singularities.  The resolutions of singularities are typically controlled by a choice of an element $\theta\in H^{2}(\X,\matR)$. Depending on the context, $\theta$ may manifest itself as a choice of a stability condition for quiver varieties, or as a choice of a convolution diagram for slices in affine Grassmannians, etc. 

We denote by $\bT$ be the maximal torus of the group $\textrm{Aut}(\X)$ acting on $\X$ by automorphisms. This torus scales the symplectic form by the character which is traditionally denoted by $\hbar$. We denote by $\bA=\ker(\hbar)$ the subtorus preserving the symplectic form, so that 
$\bT=\bA\times \matC^{\times}_{\hbar}$.
We define the {\it K\"ahler torus} of $\X$ by $\bK=\Pic(\X)\otimes_{\matZ} \matC^{\times}$. It will be convenient to think of $\theta$ as a cocharacter $\theta\in \Lie_{\matR}(\bK)$.


The coordinates in the torus $\bA$ are traditionally referred to as {\it equivariant parameters} and in the torus $\bK$ 
as {\it K\"ahler parameters} due to their role in enumerative geometry. 

We will denote by $\bT^{!},\bA^{!},\bK^{!},\hbar^{!},\theta^{!}$
 the same data associated with $\X^{!}$. 
 
 Given a  torus $\bT$ we will denote by $\bT^{\wedge}$ and $\bT^{\vee}$ the lattices of characters and cocharacters respectively.

 \subsection{}
 
 Another assumption we impose on $\X$ (and $\X^{!}$) is that the set $\X^{\bA}$ is finite. We expect that both the smoothness of $\X$ and finiteness of $\X^{\bA}$ are superfluous, but working in a more general setting requires overcoming significant technical hurdles, see \cite{OkounkovInductive} for current progress in this direction, and we postpone it for further investigations. Even with the mentioned assumptions, the class of varieties for which our treatment applies is large enough. The examples include quiver varieties of finite and affine $A_n$-type, resolutions of slices in affine Grassmannians of $A_n$-type, bow varieties \cite{NakajimaBows}, etc.

\subsection{} 

$3D$-mirror symmetry imposes constraints on the data associated with  dual varieties. The {\it first condition} is the existence of isomoprhisms\footnote{More generally, $\kappa$ also includes an extra torus $\matC^{\times}_q$ acting on the source of quasimaps. For the purposes of this paper this is not relevant, as the elliptic functions we deal with are $q$-periodic.} 
\be \label{kap}
\kappa : \bA \to \bK^{!}, \ \ \ \bK \to \bA^{!}, \ \ \matC^{\times}_\hbar \to \matC^{\times}_{\hbar^{!}}.
\ee

 Informally, (\ref{kap}) means that $3D$-mirror symmetry {\it exchanges the equivariant and the K\"ahler parameters}.

We denote
\be \label{chars}
\sigma = d\kappa^{-1}(\theta^{!}) \in \textrm{Lie}_{\matR}(\bA), \ \ \sigma^{!} =d\kappa(\theta) \in \textrm{Lie}_{\matR}(\bA^{!}).
\ee
The cocharacters $\sigma,\sigma^{!}$ conveniently define attracting and repelling directions in tangent spaces at fixed points. We define the attracting set of $p\in \X^{\bA}$ by
$$
\textrm{Attr}_{\sigma}(p)=\{x \in \X:  \lim\limits_{z\to 0} \sigma(z) \cdot x =p \}
$$
and the full attracting set $\textrm{Attr}^{f}_{\sigma}(p)$ as the smallest closed subset of $\X$ which contains $p$ and is closed under taking $\textrm{Attr}$.  There is a partial ordering on the torus fixed points of $\X$  defined by
\be \label{order}
p_1\succ p_2 \ \  \Leftrightarrow  \ \ p_2 \in \textrm{Attr}_{\sigma}^{f}(p_1),
\ee
The same applies to $\X^{!}$ with $\sigma$ replaced by $\sigma^{!}$.

\subsection{} 
The {\it second condition} is the existence of a bijection 
\be \label{biject}
  \X^{\bA} \to (\X^{!})^{\bA^{!}}
\ee
which {\it inverses}  the partial order on the fixed points. Given an $\bA$-fixed point $p\in \X^{\bA}$ we will 
denote by $p^{!}$ the corresponding $\bA^{!}$ - fixed point of $\X^{!}$.

\subsection{} 
Let $E=\matC^{\times}/q^{\matZ}$ be an elliptic curve and let $\Ell_{\bT}(\X)$ denote the corresponding equivariant elliptic cohomology scheme of a $\bT$-variety $\X$ \cite{AOElliptic,ell3,ell5,ell6,ell1}. For instance
$$
\Ell_{\bT}(pt):=\bT\left/q^{\bT^{\vee}}\right.\cong E^{\dim(\bT)}.
$$
We denote by $\mathsf{E}_{\bT}(\X) = \Ell_{\bT}(\X) \times \textrm{Ell}_{\bT^{!}}(pt)=\Ell_{\bT}(\X) \times (\bK/q^{\bK^{\vee}})$ the extension of this scheme. 

The elliptic stable envelope $\Stab^{\X,Ell}_{\sigma}(p)$ is a section of a certain line bundle over scheme $\mathsf{E}_{\bT}(\X)$ which can be constructed from a choice of $p\in \X^{\bA}$ and {\it generic} $\sigma \in \Lie_{\matR}(\bA)$.  For the definition and construction of $\Stab^{\X,Ell}_{\sigma}(p)$ we refer to \cite{AOElliptic,OkounkovInductive}. 

We recall also that ``generic'' $\sigma$  means that it is from the set
\be \label{equivcham}
\Lie_{\matR}(\bA) \setminus \{ \sigma:  \langle v , \sigma \rangle =0, \ \ v \in \textrm{char}_{\bA}(T_p \X), \  \   p\in \X^{\bA}   \} =\coprod \fC
\ee
where $\fC$ denote chambers representing connected components of this set.

The stable envelope $\Stab^{\X,Ell}_{\sigma}(p)$, as a function of $\sigma$, depends only on the chamber $\fC$. 


\subsection{}

For a $\bT$-fixed point $p\in \X$, and $\bT^{!}$-fixed point $p^{!}\in \X^{!}$ we have $\bT \times \bT^{!}$-equivariant embeddings:
$$
\X = \X\times \{ p^{!} \} \longrightarrow \X\times \X^{!} \longleftarrow  \{p\}\times \X^{!} =\X^{!}.
$$
Functoriality of elliptic cohomology induces:
$$
\mathsf{E}_{\bT}(\X) \stackrel{i_{p^{!}*}}{\longrightarrow} \mathsf{Ell}_{\bT\times \bT^{!}}(\X\times \X^{!}) \stackrel{ \ \ i_{p*}}{\longleftarrow} \mathsf{E}_{\bT^{!}}(\X^{!}).
$$
where $\mathsf{E}_{\bT}(\X) = \Ell_{\bT}(\X) \times \textrm{Ell}_{\bT^{!}}(pt)=\Ell_{\bT}(\X) \times (\bK/q^{\bK^{\vee}})$, see \cite{RSVZ1,RSVZ2} for details of this construction.

We will need a twisted version of this section, which differs from normalization accepted in \cite{AOElliptic} by a prefactor:
$$
\pmb{\Stab}^{\X,Ell}_{\sigma}(p)=\Theta(N^{-}_{p^!})\cdot  \Stab^{\X,Ell}_{\sigma}(p)
$$
where $\Theta(N^{-}_{p^!})$ is the section given explicitly by 
$$
\Theta(N^{-}_{p^!})=\prod\limits_{{w \in \textrm{char}_{\bT^{!}}( T_{p^!} \X^{!})} \atop {\sigma^{!}(w)<0}} \vartheta(\aa^{w})
$$
i.e., the product goes over the $\bT^{!}$-characters of the tangent space  which take negative values at the cocharacter $\sigma^{!}$.  Similarly, we have twisted stable envelopes on the dual side
$$
\pmb{\Stab}^{\X^{!},Ell}_{\sigma^{!}}(p^{!})=\Theta(N^{-}_{p})\cdot  \Stab^{\X^{!},Ell}_{\sigma^{!}}(p^{!}).
$$
From the definition of the stable envelopes \cite{AOElliptic} it follows that in this normalization we have: 
\be \label{norm}
\left.\pmb{\Stab}^{\X,Ell}_{\sigma}(p)\right|_{p}\left.=\pmb{\Stab}^{\X^{!},Ell}_{\sigma^{!}}(p^{!})\right|_{p^!}= \Theta(N^{-}_{p}) \cdot  \Theta(N^{-}_{p^{!}})
\ee
The {\it third condition} imposed by $3D$-mirror symmetry is the requirement that the sections $\pmb{\Stab}^{\X,Ell}_{\sigma}(p)$ , $p\in \X^{\bA}$ and $\pmb{\Stab}^{\X^{!},Ell}_{\sigma^{!}}(p^{!})$, $p^{!} \in (\X^{!})^{\bA^{!}}$ {\it glue to one global section over elliptic cohomology of} $\X\times \X^{!}$:

\vspace{2mm}

\begin{Definition} \label{mirdef} {\it
We say that a variety $\X^{!}$ is a $3D$-mirror of $\X$ if:
\begin{itemize}
\item There is an isomorphism (\ref{kap}),
\item There is a bijection (\ref{biject}), 
\item There exists a line bundle $\mathfrak{M}$ over the  scheme $\mathrm{Ell}_{\bT\times \bT^{!}}(\X\times \X^{!})$ and a section $\mathfrak{m}$ such that:
\be \label{mird}
i^{*}_{p^!}(\mathfrak{m})= \pmb{\Stab}^{\X,Ell}_{\sigma}(p), \ \ \ i^{*}_{p}(\mathfrak{m})= \pmb{\Stab}^{\X^{!},Ell}_{\sigma^{!}}(p^{!}).
\ee
\end{itemize}}
\end{Definition}


\subsection{} 
Many examples of pairs $\X$ and $\X^{!}$ satisfying Definition \ref{mirdef} have been found recently. In \cite{RSVZ1} for $\X=T^{*}Gr(k,n)$ with $n\geq 2k$ (where $Gr(k,n)$ denotes the Grassmannian of $k$-planes in $\matC^n$) we construct $\X^{!}$ as a certain quiver variety. In \cite{RSVZ2} we show that $\X\cong \X^{!} \cong T^{*}(G/B)$  where $G/B$ stands for the full flag varieties of $G=GL(n)$. This result were further extended to  flag varieties of arbitrary Lie groups in \cite{RimanWeb}, in which case $\X=T^{*}(G/B)$  and $\X^{!} = T^{*}(G^{L}/B^{L})$ where $G^{L}$ denotes the Langlands dual of $G$.

Finally, in \cite{SmirnovZhuHypertoric}  $\X^{!}$ is constructed for an arbitrary hypertoric variety $\X$. In this case, the duality interface $\mathfrak{m}$ can be described explicitly, see Theorem 6.4 in \cite{SmirnovZhuHypertoric}.

In general, if one has a vacua moduli space $\X$, physicists predict what $\X^{!}$ is. For instance, if $\X$ is a {\it bow variety} then $\X^{!}$ is  the bow variety obtained by switching the 
$\bullet$ - type and $\textsf{x}$ - type vertices in the bow diagram of $\X$ \cite{NakajimaBows}. Computing the stable envelopes and checking properties listed in Definition \ref{mirdef} is, however, a non-trivial problem. Nevertheless, we expect that the list of pairs $(\X,\X^{!})$ satisfying Definition \ref{mirdef} will grow in the nearest future.

\subsection{} 
In a very general setting, one can define $3D$-theory for a pair $(G,M)$ where $G$ is a simply connected Lie group and $M$ its symplectic representation. In this case one defines the ``Higgs branch'' of the theory as a hyperk\"ahler reduction:
$$
\X=M/\!\!/\!\!/\!\!/ G
$$
Recently, in the series of papers \cite{CoulombBoranches1,CoulombBoranches2,CoulombBranches} the authors proposed a mathematical definition of the ``Coulomb branch'' of a $3D$-theory.  It is expected that if  the Coulomb branch admits symplectic resolution with finite set of fixed points then this construction provides $\X^{!}$.   The examples discussed in the previous subsection are in agreement with this expectation. 

\section{Quasiperiods of the restriction matrices}

\subsection{} 
We identify characters and cocharacters of the K\"ahler torus $\bK$ with
$$
\bK^{\vee}=H^{2}(X,\matZ), \ \ \ \bK^{\wedge}=H_{2}(X,\matZ).
$$
in particular $\Lie_{\matR}(\bK)=H^{2}(X,\matR)$. We assume that 
$$
c_1: \Pic(X) \rightarrow H^{2}(X,\matZ)
$$
defined by the first Chern class $\cL \to c_1(\cL)$ is an isomorphism of lattices.

\subsection{}
For a fixed point $p\in \X^{\bA}$ we have a natural homomorphism:
$$
\chi_{p} : \bK^{\vee} \to \bA^{\wedge}, \ \ \ c_1(\cL) \mapsto  \delta, \ \ \textrm{determined by} \ \ \aa^{\delta}=\left.\cL\right|_{p} \in K_{\bA}(pt).  
$$
Depending on the context, it will be convenient to view it as a pairing 
$$
\chi_{p} : \bK^{\vee}\otimes \bA^{\vee} \to \matZ. 
$$
or  as a map
$$
\chi : \X^{\bA} \rightarrow \bK^{\wedge}\otimes \bA^{\wedge}.
$$

\begin{Proposition} \label{prodif}
{\it Let $C\cong \mathbb{P}^{1}$ be a $\bA$-equivariant curve in $\X$ connecting two torus fixed point $p_{+},p_{-}\in \X^{\bA}$. Let $\sigma \in \bA^{\wedge}$ be a generic cocharacter. Assume that $v \in \bA^{\wedge}$ is the character of
$T_{p_{+}} C$ (then the character of $T_{p_{-}} C$ equals $-v$) such that $\langle v,\sigma \rangle >0$, then
\be \label{chipropp}
\chi_{p_{+}} - \chi_{p_{-}} = [C] \otimes v
\ee}
\end{Proposition}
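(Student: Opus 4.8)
The plan is to verify the claimed equality of elements of $\bK^{\wedge}\otimes\bA^{\wedge}$ by pairing both sides with an arbitrary class of $\bK^{\vee}=H^{2}(\X,\matZ)$. Since $c_1:\Pic(\X)\to H^{2}(\X,\matZ)$ is an isomorphism, every element of $\bK^{\vee}$ has the form $c_1(\cL)$, so it suffices to prove
\be
\chi_{p_{+}}(c_1(\cL))-\chi_{p_{-}}(c_1(\cL)) = \langle [C],c_1(\cL)\rangle\, v \in \bA^{\wedge}
\ee
for every line bundle $\cL$. By the definition of $\chi_{p}$, the left-hand side is the difference of the $\bA$-weights of the fibers $\cL|_{p_{\pm}}\in K_{\bA}(pt)$, while $\langle[C],c_1(\cL)\rangle=\deg(\cL|_{C})$. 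Hence the whole statement reduces to a claim about the single $\bA$-equivariant line bundle $\cL|_{C}$ on $C\cong\mathbb{P}^{1}$: \emph{the difference of its fiber weights at the two fixed points equals its degree times the tangent weight $v$ at $p_{+}$.}

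To establish this, I would pass to a universal computation on $\mathbb{P}^{1}$. Because $p_{+}\neq p_{-}$ are both fixed, the $\bA$-action on $C$ is nontrivial and factors through a one-dimensional torus; I may therefore choose linear coordinates $(x_0,x_1)$ on $\matC^2$ presenting $C=\mathbb{P}(\matC^2)$ with $p_{+}=[1:0]$, $p_{-}=[0:1]$ and $\bA$-weights $\lambda_0,\lambda_1\in\bA^{\wedge}$ on $x_0,x_1$. Then $T_{p_{+}}C$ has weight $\lambda_1-\lambda_0$, which the hypothesis $\langle v,\sigma\rangle>0$ identifies with $v$ (and $T_{p_{-}}C$ with $-v$). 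Every $\bA$-equivariant line bundle on $\mathbb{P}^{1}$ is non-equivariantly some $\O(d)$, with equivariant structure unique up to twist by a global character of $\bA$. Realising $\O(-1)$ as the tautological subbundle of $C\times\matC^2$ yields fiber weights $\lambda_0$ at $p_{+}$ and $\lambda_1$ at $p_{-}$, so $\O(d)$ carries fiber weights $-d\lambda_0$ and $-d\lambda_1$; any character twist cancels in the difference, giving
\be
\mathrm{wt}(\mathcal{M}|_{p_{+}})-\mathrm{wt}(\mathcal{M}|_{p_{-}})=d\,(\lambda_1-\lambda_0)=d\,v
\ee
for any $\bA$-equivariant line bundle $\mathcal{M}$ of degree $d$ on $C$.

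Applying this to $\mathcal{M}=\cL|_{C}$ with $d=\deg(\cL|_{C})=\langle[C],c_1(\cL)\rangle$ produces the first display, and reassembling the tensor factors over all $\cL$ yields $\chi_{p_{+}}-\chi_{p_{-}}=[C]\otimes v$. I expect no conceptual obstacle here: the cocharacter $\sigma$ enters only to break the symmetry between the tangent weights $\pm v$ and thereby fix the labeling $p_{+}$ versus $p_{-}$, and it plays no role in the computation itself. The only points demanding care are bookkeeping ones, namely reading off the sign of $\mathrm{wt}(\cL|_{p})$ consistently with the convention $\aa^{\delta}=\cL|_{p}$ in the definition of $\chi_{p}$, and observing that restricting $\cL$ to $C$ loses nothing for this purpose, since the fiber at $p_{\pm}\in C$ and its $\bA$-weight are computed entirely within $C$. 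The heart of the argument is the elementary equivariant line-bundle computation on $\mathbb{P}^{1}$; everything else is formal.
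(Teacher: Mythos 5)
Your argument is correct, and it proves exactly the identity the paper's proof establishes, but by a different route at the decisive step. Both proofs reduce the claim to the statement that for the equivariant line bundle $\cL|_{C}$ on $C\cong\mathbb{P}^{1}$ the difference of its $\bA$-weights at the two fixed points equals $\deg(\cL|_{C})\cdot v$, where $v$ is the tangent weight at $p_{+}$. The paper gets this in one line from the $\bA$-equivariant localization formula,
$\langle c_{1}(\cL),[C]\rangle=\frac{c_{1}(\cL)|_{p_{+}}}{v}+\frac{c_{1}(\cL)|_{p_{-}}}{-v}$,
whereas you prove it from scratch by classifying $\bA$-equivariant line bundles on $\mathbb{P}^{1}$ (every one is $\O(d)$ up to a global character twist, and the twist cancels in the difference of fiber weights) and computing the weights of the tautological bundle explicitly. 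Your version is more elementary and self-contained, at the cost of being longer and of requiring the sign bookkeeping you flag at the end; note that with the usual convention that $\lambda_{i}$ is the weight of the coordinate function $x_{i}$, the tautological fiber at $[1:0]$ has weight $-\lambda_{0}$ rather than $\lambda_{0}$, but this global sign is consistent with your assignment of the tangent weight, so the final formula $\mathrm{wt}(\mathcal{M}|_{p_{+}})-\mathrm{wt}(\mathcal{M}|_{p_{-}})=d\,v$ is unaffected. The localization route has the mild advantage of making transparent why only the class $[C]$ and the single weight $v$ enter; your route has the advantage of not invoking the localization theorem at all. Your observation that $\sigma$ serves only to fix the labeling of $p_{\pm}$ matches the role it plays in the paper.
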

\begin{proof} 
By the $\bA$-equivariant localization
$$
\langle c_{1}(\cL),[C] \rangle= \dfrac{\left.c_{1}(\cL)\right|_{p_{+}}}{v}+ \dfrac{\left.c_{1}(\cL)\right|_{p_{-}}}{-v},
$$
thus
$$
\left.c_{1}(\cL)\right|_{p_{+}}-\left.c_{1}(\cL)\right|_{p_{+}}=\langle c_{1}(\cL),[C] \rangle v
$$
which is exactly the value of $\chi_{p_{+}}-\chi_{p_{-}}$ at $c_{1}(\cL)$.
\end{proof}

\begin{Remark}
$(\ref{chipropp})$ defines $\chi$ up to a shift by an element of $\bK^{\wedge}\otimes \bA^{\wedge}$. In particular, it determines $\chi$ from its value at one point $p\in \X^{\bA}$. 
\end{Remark}

\subsection{} 
Let us consider the matrix:
\be \label{tnorm}
\tilde{T}^{\X}_{p,r}(\zz,\aa):= \dfrac{\left.\Stab^{\X,Ell}_{\sigma}(p)\right|_{r}  }{\Theta(N^{-}_{r})}=\dfrac{\left.\pmb{\Stab}^{\X,Ell}_{\sigma}(p)\right|_{r}  }{\Theta(N^{-}_{r}) \Theta(N^{-}_{p^{!}})  }, \ \ \ p,r \in \X^{\bA},
\ee
consisting of the fixed point components of elliptic stable envelopes. By~(\ref{norm}) $\tilde{T}^{\X}_{r,r}(\zz,\aa)=1$.
 The identity (\ref{mird}) then gives:
\be \label{tmir}
\tilde{T}^{\X}_{p,r}(\zz,\aa)=\kappa^{*}(\tilde{T}^{\X^{!}}_{r^{!},p^{!}}(\zz,\aa)).
\ee
where $\kappa^{*}$ means that the parameters of $\X$ are identified with those of $\X^{!}$ via~$\kappa$. 

\begin{Remark}
By definition, $\Stab^{\X,Ell}_{\sigma}(p)$ is supported at $\textrm{Attr}^{f}_{\sigma}(p)$, and thus 
$\tilde{T}^{X}_{p,r}(\zz,\aa)$ is a {\it lower triangular matrix} if the fixed points $\X^{\bA}$
are ordered by~(\ref{order}) (from lowest to highest) associated with $\sigma$. 
\end{Remark}

\begin{Remark}
From (\ref{tmir}) we also see the partial orders on $\X^{\bA}$ and $(\X)^{\bA^{!}}$ associated with $\sigma$ and $\sigma^{!}$ must be inverses of each other.
\end{Remark}


\subsection{} 
The map $\chi$ controls quasiperiods of the elliptic stable envelopes \cite{AOElliptic}. For $\sigma \in \bA^{\vee}$ and $\delta\in \bK^{\vee}$ we have:
$$
\begin{array}{l}
\tilde{T}^{\X}_{p,r}(\zz q^{\sigma},\aa)= \aa^{\chi_{p}(\sigma,\cdot)-\chi_{r}(\sigma,\cdot)} \tilde{T}^{\X}_{p,r}(\zz,\aa), \\
\\
\tilde{T}^{\X}_{p,r}(\zz ,\aa q^{\delta})= \zz^{\chi_{p}(\cdot,\delta)-\chi_{r}(\cdot,\delta)} \tilde{T}^{\X}_{p,r}(\zz,\aa).
\end{array}
$$
It also controls vanishing of the matrix elements $\tilde{T}^{\X}_{p,r}(\zz,\aa)$:
\begin{Proposition} \label{dprop}
{\it Let $\tilde{T}^{\X}_{p,r}(\zz,\aa)$ be the matrix of restrictions of the stable envelopes corresponding to a chamber $\fC \subset \Lie_{\matR}(\bA)$. If $\tilde{T}^{\X}_{p,r}(\zz,\aa)\neq 0$ then 
$$
\chi_{p}-\chi_{r} \in H_{2}(\X,\matZ)_{\mathit{eff}} \otimes \bA^{\wedge}_{>}
$$
where $\bA^{\wedge}_{>}$ denotes the cone of characters positive on $\fC$.}
\end{Proposition}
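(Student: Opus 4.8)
The plan is to combine the support property of the elliptic stable envelope with the just-proved Proposition \ref{prodif}, telescoping the change of $\chi$ along a chain of invariant curves. First recall that $\Stab^{\X,Ell}_{\sigma}(p)$ is supported on $\Attr^{f}_{\sigma}(p)$, so that $\tilde{T}^{\X}_{p,r}(\zz,\aa)\neq 0$ forces $r\in\Attr^{f}_{\sigma}(p)$, i.e. $p\succeq r$ in the order (\ref{order}). I would then unwind this relation into a chain of fixed points
\[
p=q_{0}\succ q_{1}\succ\cdots\succ q_{N}=r,
\]
in which every consecutive pair $q_{i-1}\succ q_{i}$ is joined by an $\bA$-invariant rational curve $C_{i}\cong\mathbb{P}^{1}$ having $q_{i-1},q_{i}$ as its two torus-fixed points.

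To each edge I would apply Proposition \ref{prodif}. Since $q_{i-1}$ is the larger point, the curve $C_{i}$ lies in $\Attr^{f}_{\sigma}(q_{i-1})$, so its tangent weight $v_{i}\in\bA^{\wedge}$ at $q_{i-1}$ satisfies $\langle v_{i},\sigma\rangle>0$; Proposition \ref{prodif} then gives
\[
\chi_{q_{i-1}}-\chi_{q_{i}}=[C_{i}]\otimes v_{i}.
\]
Here $[C_{i}]$ is the class of an honest curve, hence lies in $H_{2}(\X,\matZ)_{\mathit{eff}}$. Moreover $\langle v_{i},\cdot\rangle=0$ is one of the walls appearing in (\ref{equivcham}), so $\langle v_{i},\cdot\rangle$ has constant sign on the connected chamber $\fC$; as $\sigma\in\fC$ and $\langle v_{i},\sigma\rangle>0$, we conclude $v_{i}\in\bA^{\wedge}_{>}$. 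Telescoping along the chain yields
\[
\chi_{p}-\chi_{r}=\sum_{i=1}^{N}[C_{i}]\otimes v_{i},
\]
a sum of simple tensors of effective curve classes with characters positive on $\fC$, which is therefore an element of $H_{2}(\X,\matZ)_{\mathit{eff}}\otimes\bA^{\wedge}_{>}$, as claimed.

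The main obstacle is the second half of the first step: producing a chain of invariant $\mathbb{P}^{1}$'s realizing $p\succeq r$. For the symplectic resolutions considered here, with isolated $\bA$-fixed points, this is supplied by Bialynicki--Birula theory---the attracting sets decompose into affine cells and the partial order is generated by its covering relations---but one must verify that each covering relation is carried by a single invariant curve rather than only by a higher-dimensional cell boundary. An alternative route that bypasses this geometry reads the two quasiperiod formulas preceding the statement as saying that $\tilde{T}^{\X}_{p,r}$ is a section of a line bundle over the abelian variety $\Ell_{\bA}(pt)\times(\bK/q^{\bK^{\vee}})$ whose class is encoded by $\chi_{p}-\chi_{r}$; effectivity of this class, forced by the existence of a nonzero holomorphic section, then gives the cone membership directly. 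I would present the curve-chain argument as the primary proof and keep the line-bundle argument in reserve, since the latter requires a careful analysis of the pole structure of $\tilde{T}^{\X}_{p,r}$ along $\Theta(N^{-}_{r})$.
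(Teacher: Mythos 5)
Your proposal follows essentially the same route as the paper: the support of the elliptic stable envelope on $\Attr^{f}_{\sigma}(p)$ forces $r\in\Attr^{f}_{\sigma}(p)$, one connects $p$ to $r$ by a chain of $\bA$-invariant curves with tangent characters positive on $\fC$, and Proposition~\ref{prodif} is applied edge by edge and telescoped. The extra care you take about realizing covering relations by single invariant $\mathbb{P}^{1}$'s is a point the paper itself passes over silently, so your write-up is simply a more detailed version of the same argument.
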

\begin{proof}
The elliptic stable envelope of $p\in \X^{\bA}$ is supported at $\mathrm{Attr}^{f}(p)$, so
if $\tilde{T}^{\X}_{p,r}(\zz,\aa)\neq 0$ then $r\in \mathrm{Attr}^{f}(p)$. This condition means that there exists a chain of invariant curves $C$ connecting the points $p$ and $r$ such that the character $T_{p} C$ is positive at $\fC$. The result follows from Proposition~\ref{prodif}. 
\end{proof}




\section{K-theoretic limits for regular slopes}
\subsection{} 
Before we discuss the general situation, let us consider an example which reveals relevant properties of the elliptic functions and their trigonometric limits. Let us consider a function 
$$
f(z,a)=\dfrac{\vartheta(a z)}{\vartheta(a) \vartheta(z)}.
$$
where 
$$
\vartheta(x)=(x^{1/2}-x^{-1/2}) \prod\limits_{i=1}^{\infty} (1-x q^{i}) (1-x^{-1} q^{i})
$$
is the odd Jacobi theta-function. For $s\in \matR$ an elementary calculation gives:
$$
\lim 
\limits_{q\to 0} f(z q^{s},a)=\left\{\begin{array}{ll}
\dfrac{a^{-\lfloor s \rfloor}}{a-1}, & s\not\in \matZ,\\
\dfrac{1-z a}{(a-1)(1-z)} a^{-s},& s\in \matZ,
\end{array}\right.
$$
where $\lfloor s \rfloor \in \matZ$ denotes the integral part of $s$. The same works for the limit of $f(z,a q^{s})$ by symmetry $z\leftrightarrow a$. 
\vspace{2mm}

\noindent
We see that the $q\to 0$ limit of $f(z q^{s},a)$

\begin{itemize}
\item is a piecewise constant functions of $s\in \matR$,

\item  changes only when $s$ crosses ``walls'' located at $\matZ\subset \matR$, 

\item for regular $s$, i.e., $s\in \matR\setminus\matZ$ the limit does not depend on $z$. 
\end{itemize}

\subsection{}

The $K$-theoretic limit of the elliptic stable envelopes is a multivariable version of the previous example. 

\begin{Theorem}[\cite{AOElliptic,OkounkovInductive}] \label{thm1}
{\it For $\sl \in H^{2}(\X,\matR)$ the limit $\lim\limits_{q\to 0} \tilde{T}_{p,r}(\zz q^{\sl},\aa)$:
\begin{itemize} 
\item is a piecewise constant function of $\sl$,
\item changes only when $\sl$ crosses a hyperplane from a certain hyperplane arrangement $\mathsf{Walls}(\X) \subset H^{2}(\X,\matR)$,
\item for regular slopes  $\sl \in H^{2}(\X,\matR)\setminus \mathsf{Walls}(\X)$ the limit does not depend on the K\"ahler parameters $\zz$. In this case the limit equals
$$
\lim\limits_{q\to 0} \tilde{T}_{p,r}(\zz q^{\sl},\aa)=\tilde{A}^{[\sl],\X}_{p,r}
$$
where $\tilde{A}^{[\sl],\X}_{p,r}$ is the  matrix of fixed point components of the $K$-theoretic stable envelopes of $\X$ with the slope $\sl$:
\be \label{amat}
\tilde{A}^{[\sl],\X}_{p,r} = \dfrac{\left.\Stab^{[\sl],\X,{K}}_{\sigma}(p)\right|_{r}}{\left.\Stab^{[\sl],\X,{K}}_{\sigma}(r)\right|_{r}}.
\ee
\end{itemize}}
\end{Theorem}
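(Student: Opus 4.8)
The plan is to reduce the multivariable statement to the one-variable computation carried out in the warm-up, using the quasiperiodicity of $\tilde{T}^{\X}_{p,r}$ to organize the theta-function factors. First I would recall that, being the restriction of an elliptic stable envelope to a fixed point, $\tilde{T}^{\X}_{p,r}(\zz,\aa)$ is a meromorphic section of a line bundle over the abelian variety $\mathrm{Ell}_{\bT}(\X)\times(\bK/q^{\bK^{\vee}})$, and hence admits a finite expression as a ratio of products of Jacobi theta functions $\vartheta$ in the coordinates $\zz,\aa$. Its quasiperiods in $\zz$ are pinned down by the maps $\chi_{p},\chi_{r}$, so that the $\zz$-dependence enters through monomials $\zz^{\beta}$ whose exponents $\beta$ range over a \emph{finite} set; by Proposition~\ref{dprop} these exponents lie in $H_{2}(\X,\matZ)_{\mathit{eff}}\otimes\bA^{\wedge}_{>}$.

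Next I would substitute $\zz\mapsto\zz\,q^{\sl}$ and let $q\to 0$ factor by factor. Each theta factor carrying $\zz$-dependence degenerates exactly as in the scalar example $f(z\,q^{s},a)$: the limit is piecewise constant in $\sl$, with jumps precisely when some pairing $\langle\sl,\beta\rangle$ crosses an integer. Defining $\mathsf{Walls}(\X)$ to be the arrangement of hyperplanes $\{\sl:\langle\sl,\beta\rangle\in\matZ\}$ over the finitely many effective classes $\beta$ that occur establishes the first two bullet points (local finiteness follows from the effectivity constraint of Proposition~\ref{dprop}). For a regular slope $\sl\notin\mathsf{Walls}(\X)$ every pairing $\langle\sl,\beta\rangle$ is non-integral, so by the first (non-integer) branch of the warm-up limit each factor tends to a monomial in $\aa$ determined by $\lfloor\langle\sl,\beta\rangle\rfloor$ with no surviving $\zz$-dependence; this gives the third bullet, namely $\zz$-independence of the limit for regular $\sl$.

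It remains to identify the $\zz$-independent limit with the K-theoretic stable envelope matrix $\tilde{A}^{[\sl],\X}_{p,r}$ of~(\ref{amat}). Here I would argue that the limiting matrix inherits the defining properties of the K-theoretic stable envelope of slope $[\sl]$: normalization of the diagonal (from $\tilde{T}^{\X}_{r,r}=1$), triangularity with respect to the order~(\ref{order}) (from the support of $\Stab^{\X,Ell}_{\sigma}(p)$ on $\mathrm{Attr}^{f}_{\sigma}(p)$), and the degree/window condition in the equivariant variables $\aa$, which is controlled by the slope through the surviving floor exponents $\lfloor\langle\sl,\beta\rangle\rfloor$. Matching this window to the one characterizing the slope-$[\sl]$ K-theoretic stable envelope completes the proof.

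The main obstacle, I expect, is this final identification step. The piecewise-constancy and wall structure are essentially formal consequences of the warm-up together with quasiperiodicity, but showing that the degenerate matrix \emph{is} the K-theoretic stable envelope — rather than merely a triangular, normalized matrix with the correct quasiperiods — requires pinning down the precise degree window in $\aa$ and verifying that it agrees with the characterization of \cite{AOElliptic}. This is where one genuinely uses that the elliptic stable envelope degenerates to the K-theoretic one, and careful bookkeeping of the floor functions $\lfloor\langle\sl,\beta\rangle\rfloor$ across the chamber containing $\sl$ is needed.
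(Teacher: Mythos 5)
You should first note that the paper does not prove this statement at all: Theorem~\ref{thm1} is imported verbatim from \cite{AOElliptic,OkounkovInductive} (hence the bracketed citation in the theorem header), so there is no internal argument to compare yours against. Judged on its own, your sketch correctly reproduces the overall strategy of those references --- shift $\zz\mapsto \zz q^{\sl}$, degenerate theta factors one at a time as in the scalar warm-up, read off piecewise constancy and the wall arrangement from the integrality of pairings, and then identify the regular-slope limit with the $K$-theoretic stable envelope by checking support, normalization, and the degree window. The triangularity and diagonal normalization do descend immediately from the elliptic class, as you say.

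There are, however, two genuine gaps. First, your opening step assumes that $\tilde{T}^{\X}_{p,r}(\zz,\aa)$ is a finite ratio of products of Jacobi theta functions to which the warm-up limit can be applied factor by factor. Being a meromorphic section of a line bundle on $\mathrm{Ell}_{\bT}(\X)\times(\bK/q^{\bK^{\vee}})$ with prescribed quasiperiods does not by itself give such a presentation; one needs the explicit inductive or abelianization construction of the elliptic stable envelope (this is precisely what \cite{OkounkovInductive} supplies), and without it the ``factor by factor'' degeneration has nothing to act on. Second, the final identification is not merely ``careful bookkeeping'': the window condition of \cite{pcmilect} bounds the $\bA$-Newton polygon of $\Stab^{[\sl],\X,K}_{\sigma}(p)|_{r}$ by a polytope translated by $\chi_{p}(\sl,\cdot)-\chi_{r}(\sl,\cdot)$, and one must show that the monomials $\aa^{\lfloor\langle\sl,\beta\rangle\rfloor\cdot(\cdots)}$ produced by the degeneration land in exactly that translate, and then invoke the uniqueness theorem for $K$-theoretic stable envelopes at generic slope to conclude equality. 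You flag this as the hard step but do not carry it out, and since uniqueness is the only mechanism that forces the limit to \emph{be} the stable envelope rather than some other triangular normalized matrix, the proof is incomplete without it. Both gaps are filled in the cited references; if your goal is a self-contained argument you would essentially have to reprove their main results.
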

\begin{Remark}
We note that the matrix $\tilde{A}^{[\sl],\X}_{p,r}$ is normalized as in (\ref{tnorm}) so that $\tilde{A}^{[\sl],\X}_{r,r}=1$ for $r\in \X^{\bA}$. \end{Remark}
\noindent
For the definitions of the $K$-theoretic stable envelope classes $\Stab^{[\sl],\X,{K}}_{\sigma}(p) \in K_{\bT}(\X)$ of fixed points $p\in\X^{\bT}$ we refer to $\cite{pcmilect,OS}$.

\subsection{ \label{ressect}} 
For an element $\mathsf{w}=({w}_1,\dots,{w}_n) \in \Lie_{\matR}(\bA) \cong \matR^{n}$ we consider $\omega=e^{2\pi i  \mathsf{w}} =(e^{2\pi i {w}_1},\dots, e^{2\pi i {w}_n} ) \in \bA$. Let $\nn_{\mathsf{w}} \subset \bA$ be the cyclic subgroup generated by $\omega$. In \cite{KononovSmirnov1} we  considered the following set
$$
\textsf{Res}({\X}):=\{ \mathsf{w} : \X^{\nn_{\mathsf{w}}}\neq \X^{\bA} \} \subset \Lie_{\matR}(\bA).
$$
which we called the set of {\it resonances}. It is known that $\textsf{Res}({\X})$ is an arrangement of hyperplanes in $\Lie_{\matR}(\bA)$ given explicitly by:
$$
\textsf{Res}({\X})=\{ \wall \in \Lie_{\matR}(\bA): \langle \alpha,\wall \rangle + m=0, \ \  m\in \matZ, \ \ \alpha \in \mathrm{char}_{\bA}(T_{p} \X), \  p \in \X^{\bA}  \}
$$
see Proposition 5 in \cite{KononovSmirnov1}.

\begin{Theorem} \label{thm2}
{\it $3D$-mirror symmetry switches the walls with the resonances:
$$
\mathsf{Res}(\X)=\mathsf{Walls}(\X^{!}), \ \ \ \mathsf{Res}(\X^{!})=\mathsf{Walls}(\X),
$$
where we identify $\Lie_{\matR}(\bA)\cong \Lie_{\matR}(\bK^{!})$ and $\Lie_{\matR}(\bK)\cong \Lie_{\matR}(\bA^{!})$ via $\kappa$.

\vspace{2mm}
\noindent The limit 
$\lim\limits_{q\to 0} \tilde{T}_{p,r}(\zz ,\aa q^{\mathsf{w}})$: 
\begin{itemize}
    \item is a piecewise constant function of $\mathsf{w}\in \Lie_{\matR}(\bA)$,
    \item changes only when $\mathsf{w}$ crosses a hyperplane in $\mathsf{Res}({\X})$,
    \item is independent on the equivariant parameters $\aa$ when $\mathsf{w} \in \Lie_{\matR}(\bA)\setminus \mathsf{Res}({\X})$. 
    
    In this case the limit is equal
    $$
    \lim\limits_{q\to 0} \tilde{T}_{p,r}(\zz ,\aa q^{\mathsf{w}})=\tilde{Z}^{[\mathsf{w}],\X^{!},{K}}_{r^{!}, p^{!}}
    $$
    where $\tilde{Z}^{[\mathsf{w}],\X^{!},{K}}_{r^{!}, p^{!}}$ denotes the matrix of fixed point components of the $K$-theoretic stable envelopes of $\X^{!}$ with slope $\mathsf{w} \in \Lie_{\matR}(\bA)\cong H^{2}(\X^{!},\matR)$:
    \be \label{zmat}
    \tilde{Z}^{[\mathsf{w}],\X^{!}}_{p^{!},r^{!}} = \dfrac{\left.\Stab^{[\mathsf{w}],\X^{!},{K}}_{\sigma^{!}}(p^{!})\right|_{r^{!}}}{\left.\Stab^{[\mathsf{w}],\X^{!},{K}}_{\sigma^{!}}(r^!)\right|_{r^!}}
    \ee
    where we assume that the fixed points and the parameters are identified by (\ref{biject}) and (\ref{kap}).
\end{itemize}
}
\end{Theorem}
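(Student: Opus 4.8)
The plan is to deduce the theorem from the mirror identity (\ref{tmir}) together with Theorem~\ref{thm1} applied to the dual variety $\X^{!}$. The crucial point is that, by (\ref{kap}), the isomorphism $\kappa$ exchanges the equivariant torus $\bA$ of $\X$ with the K\"ahler torus $\bK^{!}$ of $\X^{!}$. Hence, rewriting the left-hand side by (\ref{tmir}),
\[
\tilde{T}^{\X}_{p,r}(\zz,\aa q^{\mathsf{w}}) = \kappa^{*}\big(\tilde{T}^{\X^{!}}_{r^{!},p^{!}}\big)(\zz,\aa q^{\mathsf{w}}),
\]
the shift $\aa \mapsto \aa q^{\mathsf{w}}$ of the equivariant parameters of $\X$ turns into a shift of the \emph{K\"ahler} parameters of $\X^{!}$ by $q^{\mathsf{w}}$, with $\mathsf{w}$ now read as a slope in $\Lie_{\matR}(\bK^{!}) \cong H^{2}(\X^{!},\matR)$. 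Concretely,
\[
\lim_{q\to 0} \tilde{T}^{\X}_{p,r}(\zz,\aa q^{\mathsf{w}}) = \lim_{q\to 0} \tilde{T}^{\X^{!}}_{r^{!},p^{!}}(\zz^{!} q^{\mathsf{w}},\aa^{!}),
\]
where under $\kappa$ the K\"ahler parameter $\zz^{!}$ is identified with $\aa$ and the equivariant parameter $\aa^{!}$ with $\zz$.

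Granting this reduction, all of the assertions about the limit follow immediately from Theorem~\ref{thm1} applied to $\X^{!}$ at the slope $\mathsf{w}$. Indeed, the right-hand side is then piecewise constant in $\mathsf{w}$, changes only across $\mathsf{Walls}(\X^{!})$, and for regular $\mathsf{w}\notin\mathsf{Walls}(\X^{!})$ is independent of the K\"ahler parameter $\zz^{!}=\aa$ --- that is, independent of the equivariant parameters $\aa$ --- and equals the matrix $\tilde{A}^{[\mathsf{w}],\X^{!}}_{r^{!},p^{!}}$ of fixed-point components of the $K$-theoretic stable envelopes of $\X^{!}$ for the cocharacter $\sigma^{!}$. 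Since the latter is exactly $\tilde{Z}^{[\mathsf{w}],\X^{!}}_{r^{!},p^{!}}$ of (\ref{zmat}), this gives the piecewise-constancy, the $\aa$-independence, and the final formula.

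It remains to identify the jumping locus with $\mathsf{Res}(\X)$, which will simultaneously produce the equality $\mathsf{Walls}(\X^{!})=\mathsf{Res}(\X)$ and upgrade ``changes only across $\mathsf{Walls}(\X^{!})$'' to the stated second bullet (the symmetric identity $\mathsf{Walls}(\X)=\mathsf{Res}(\X^{!})$ then follows by swapping the roles of $\X$ and $\X^{!}$). To describe the locus intrinsically on the $\X$ side, I would analyze the $q\to 0$ limit of $\tilde{T}^{\X}_{p,r}(\zz,\aa q^{\mathsf{w}})$ directly: the restriction matrix is a ratio of products of theta functions whose arguments are monomials $\aa^{\alpha}\zz^{\beta}$, with $\alpha$ ranging over the $\bA$-characters of the tangent spaces $T_{p}\X$; after the shift each argument acquires a factor $q^{\langle\alpha,\mathsf{w}\rangle}$, and, exactly as in the one-variable computation of $\lim_{q\to 0} f(zq^{s},a)$ in the preceding section, the limit of each factor is locally constant in $\mathsf{w}$ and can only change across $\langle\alpha,\mathsf{w}\rangle\in\matZ$. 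These hyperplanes are precisely $\mathsf{Res}(\X)$, by the description recalled from \cite{KononovSmirnov1}.

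The delicate step is this last one. The theta analysis shows readily that the jumping locus is \emph{contained in} $\mathsf{Res}(\X)$, but equality of arrangements further requires that a genuine discontinuity occur across each resonance hyperplane, with no accidental cancellation leaving the limit unchanged. In the multivariable setting, where many theta factors jump at once and one must control their combined behaviour in the ratio defining $\tilde{T}^{\X}_{p,r}$, ruling out such cancellations is the main obstacle. Here the comparison with $\mathsf{Walls}(\X^{!})$ supplied by Theorem~\ref{thm1} is decisive: it pins the locus down from the dual side and forces the two \emph{a priori} distinct descriptions --- the intrinsic resonance arrangement of $\X$ and the wall arrangement of $\X^{!}$ --- to coincide.
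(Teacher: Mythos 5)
Your treatment of the three bullet points is exactly the paper's argument: the proof given there is the single line that everything follows from Theorem~\ref{thm1}, Proposition~5 of \cite{KononovSmirnov1}, and the mirror relation (\ref{tmir}) --- i.e., rewrite $\tilde{T}^{\X}_{p,r}(\zz,\aa q^{\wall})$ via (\ref{tmir}) as a restriction matrix for $\X^{!}$ in which the shift of equivariant parameters of $\X$ becomes a shift of K\"ahler parameters of $\X^{!}$, and then apply Theorem~\ref{thm1} to $\X^{!}$. That part of your proposal is correct and is the intended route.

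The gap is in your argument for the first assertion, $\mathsf{Res}(\X)=\mathsf{Walls}(\X^{!})$. What you actually establish is two containments of the \emph{jumping locus} of $\lim_{q\to 0}\tilde{T}^{\X}_{p,r}(\zz,\aa q^{\wall})$: it lies inside $\mathsf{Res}(\X)$ (your theta-factor analysis) and inside $\mathsf{Walls}(\X^{!})$ (Theorem~\ref{thm1} on the dual side). Two hyperplane arrangements that both contain a common subset need not coincide, so the concluding sentence that the dual side ``pins the locus down'' and ``forces'' the arrangements to agree is not a deduction --- you flag the missing reverse inclusions yourself (a genuine jump must occur across every hyperplane of each arrangement, with no cancellation) and then do not close them. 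The paper does not identify the arrangements through the jumping locus at all: it invokes Proposition~5 of \cite{KononovSmirnov1}, which gives $\mathsf{Res}(\X)$ \emph{explicitly} as the hyperplanes $\langle\alpha,\wall\rangle+m=0$ with $\alpha$ running over the $\bA$-characters of tangent spaces at fixed points, and this explicit description is matched under $\kappa$ against the explicit affine-root description of $\mathsf{Walls}(\X^{!})$ by hyperplanes $\langle x,[C]\rangle+n=0$, using Proposition~\ref{prodif} and the quasiperiod data to identify tangent weights of $\X$ with curve classes of $\X^{!}$. To repair your proof you should either supply that identification or cite it; as written, the displayed equality of arrangements is asserted rather than proved.
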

\begin{proof}
All statements follow from Theorem \ref{thm1}, Proposition 5 in \cite{KononovSmirnov1} and $3D$-mirror symmetry relation~(\ref{tmir}).
\end{proof}


\section{K-theoretic limit for non-regular slopes}
\subsection{} 

We need the following orthogonality of the $K$-theoretic stable envelopes
\begin{Lemma} \label{orthlem}
$$
\chi_{X}\left(\Stab^{[-s],\X,{K}}_{-\sigma}(p) \otimes \Stab^{[s],\X,{K}}_{\sigma}(r)\right) =\delta_{p,r}
$$
\end{Lemma}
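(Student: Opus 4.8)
The plan is to prove the orthogonality statement
$$
\chi_{X}\!\left(\Stab^{[-s],\X,{K}}_{-\sigma}(p) \otimes \Stab^{[s],\X,{K}}_{\sigma}(r)\right) =\delta_{p,r}
$$
by combining the triangularity of stable envelopes with respect to the partial order (\ref{order}) together with equivariant localization for the pairing $\chi_{\X}$. The key structural fact is that $\Stab^{[s],\X,{K}}_{\sigma}(r)$ is supported on $\Attr^{f}_{\sigma}(r)$ and is triangular with diagonal normalization, while $\Stab^{[-s],\X,{K}}_{-\sigma}(p)$ is supported on $\Attr^{f}_{-\sigma}(p)$, i.e. triangular with respect to the \emph{opposite} chamber and the opposite slope $-s$. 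The opposing choices of chamber and slope are precisely what force the two triangular matrices to be mutually inverse under the pairing.

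First I would expand the pairing by $\bA$-equivariant localization over the fixed points $\X^{\bA}$:
$$
\chi_{\X}\!\left(\Stab^{[-s],\X,{K}}_{-\sigma}(p) \otimes \Stab^{[s],\X,{K}}_{\sigma}(r)\right)
=\sum_{t\in \X^{\bA}}
\frac{\left.\Stab^{[-s],\X,{K}}_{-\sigma}(p)\right|_{t}\,\left.\Stab^{[s],\X,{K}}_{\sigma}(r)\right|_{t}}{\bigwedge^{\!\bullet} N^{\vee}_{t}},
$$
where $N_t$ is the tangent space $T_t\X$ and $\bigwedge^{\!\bullet}N^{\vee}_t$ is its K-theoretic Euler class. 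Next I would record the two triangularity constraints: the factor $\left.\Stab^{[s],\X,{K}}_{\sigma}(r)\right|_{t}$ vanishes unless $t\preceq r$ (i.e. $t\in\Attr^{f}_{\sigma}(r)$), while $\left.\Stab^{[-s],\X,{K}}_{-\sigma}(p)\right|_{t}$ vanishes unless $t\succeq p$ in the same order (since reversing $\sigma$ reverses the order). Hence the summand is nonzero only for $p\preceq t\preceq r$, which already forces the pairing to vanish unless $p\preceq r$; by symmetry of the two factors one gets the reverse implication as well, so the pairing vanishes unless $p=r$, giving the off-diagonal $\delta_{p,r}=0$ cases.

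For the diagonal case $p=r$, only the term $t=p$ survives, and I would compute it from the normalization of the diagonal restrictions together with the known formula for the self-restriction of a stable envelope, namely
$$
\left.\Stab^{[s],\X,{K}}_{\sigma}(p)\right|_{p}=\prod_{w\in\mathrm{char}(T_p\X),\,\sigma(w)<0}(1-a^{w}),
$$
the product over the \emph{repelling} directions, with the opposite-chamber envelope producing the product over the complementary (attracting) directions. The product of these two self-restrictions reassembles exactly $\bigwedge^{\!\bullet}N^{\vee}_p$ up to sign, cancelling the localization denominator and yielding $1$; the sign and the $\kappa$-weight bookkeeping are handled by the normalization convention (\ref{tnorm}) that makes the diagonal entries equal to $1$, so the matrices $\tilde{A}$ are genuinely unipotent-triangular and their product with the opposite-slope version is the identity.

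The main obstacle I anticipate is purely a matter of sign and polarization bookkeeping in the diagonal term: one must check carefully that splitting $\mathrm{char}(T_p\X)$ into the $\sigma$-attracting and $\sigma$-repelling halves, each weighted by the correct slope-dependent prefactors coming from $[s]$ and $[-s]$, recombines into exactly $\bigwedge^{\!\bullet}N^{\vee}_p$ with no residual equivariant weight. This is where the choice of \emph{opposite} slopes $s$ and $-s$ is essential rather than cosmetic, and getting the half-weight conventions of the K-theoretic stable envelope to match is the delicate step; the triangularity argument establishing the off-diagonal vanishing is comparatively routine.
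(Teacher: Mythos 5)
The paper does not actually prove this lemma: its ``proof'' is a citation to Proposition 1 of \cite{OS}, so your attempt should be judged as a reconstruction of that standard argument. Your localization set-up and the reduction of the diagonal entry to the factorization of $\bigwedge^{\!\bullet}T_p\X^{\vee}$ into attracting and repelling halves are the right ingredients for the $p=r$ case.

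However, there is a genuine gap in the off-diagonal vanishing. Triangularity of the supports gives only that the summand at $t$ vanishes unless $p\preceq t\preceq r$, hence that the pairing vanishes unless $p\preceq r$. Your claim that ``by symmetry of the two factors one gets the reverse implication'' is not correct: the pairing $\chi_{\X}(A\otimes B)$ is symmetric in $A$ and $B$, so swapping the factors reproduces the same condition $p\preceq r$ rather than $r\preceq p$; to reverse the inequality you would have to exchange $\sigma$ with $-\sigma$, which exchanges the two stable envelopes and proves nothing new. Support considerations alone therefore only show that the Gram matrix is triangular with unit diagonal, not that it is the identity. The strictly off-diagonal entries $p\prec r$ are killed by a different mechanism: for such $p,r$ the intersection of supports is proper, so the pairing is an honest Laurent polynomial in $\aa$, and the degree (``window'') condition in the definition of the slope-$s$ and slope-$(-s)$ envelopes --- exactly the estimate of the form (\ref{np1})--(\ref{np2}) used in the proof of Theorem \ref{factortheorem} --- forces its Newton polygon to lie in the interior of that of $\bigwedge^{\!\bullet}T_t\X^{\vee}$ shifted by $\chi_p(s,\cdot)-\chi_r(s,\cdot)$, so the polynomial tends to $0$ at both infinities of the chamber and must vanish. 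Note this is precisely where the opposite slopes $\pm s$ do essential work; in your write-up the slopes only appear in the diagonal bookkeeping, where in fact they cancel ($\chi_p(s,\cdot)-\chi_p(s,\cdot)=0$) and play no role. Without the degree argument the proof establishes unipotence, not orthogonality.
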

\begin{proof}
Proposition 1 in \cite{OS}.
\end{proof}

\noindent
Recall that a slope $\sl \in H^{2}(\X,\matR)\setminus \mathsf{Walls}(\X)$ is called {\it regular}. 

The following theorem described the limit of the elliptic stable envelope to a wall as a product of two operators, one of which depends significantly on equivariant, and the other on K\"ahler parameters.
\begin{Theorem} \label{factortheorem}
{\it Let $\sl \in H^{2}(\X,\matR)$ and $\sl'$ is a regular slope from a small analytic neighborhood of $\sl$, then the limit factorizes
$$
\lim\limits_{q\to0}\tilde{T}(\zz q^{\sl},\aa)= \tilde{Z}^{''} \tilde{A}^{[\sl'],\X} 
$$
where $\tilde{A}^{[\sl'],\X}=(\tilde{A}^{[\sl'],\X}_{p,r})_{p,r\in \X^{\bA}}$ is the matrix of $K$-theoretic stable envelope of $\X$ with slope $\sl'$ defined by (\ref{amat}). The matrix elements of $\tilde{Z}$ are monomials in~$\aa$:
$$
\tilde{Z}_{p,r}^{''}= Z_{p,r}^{'} \aa^{\chi_{p}(\sl,\cdot,)-\chi_{r}(\sl,\cdot)}, \ \ \tilde{Z}_{p,r}^{'} \in \matQ(\zz,\hbar). 
$$
In particular 
$$
Z^{'}_{p,r} \neq 0 \ \ \Rightarrow \ \ \chi_{p}(\sl,\cdot,)-\chi_{r}(\sl,\cdot)\in \bA^{\wedge}.
$$}
\end{Theorem}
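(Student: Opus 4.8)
The plan is to combine the three pieces of structure that the excerpt has already assembled: the quasiperiodicity of $\tilde{T}^{\X}$ in the K\"ahler variables $\zz$, the factorization of the $q\to 0$ limit into a $\zz$-dependent and an $\aa$-dependent part, and the orthogonality of Lemma~\ref{orthlem}. Concretely, I would first take a regular slope $\sl'$ in a small neighborhood of the possibly non-regular $\sl$, and write $\sl' = \sl + \epsilon$ with $\epsilon$ small. The key observation is that the substitution $\zz \mapsto \zz q^{\sl}$ can be analyzed by splitting $\sl$ into its ``integral lattice part'' (governing the nontrivial $\aa$-monomial prefactor via the quasiperiod relations) and the piecewise-constant regular limit that survives. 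Using the first quasiperiod identity
$$
\tilde{T}^{\X}_{p,r}(\zz q^{\sigma},\aa)= \aa^{\chi_{p}(\sigma,\cdot)-\chi_{r}(\sigma,\cdot)} \tilde{T}^{\X}_{p,r}(\zz,\aa),
$$
I would track how the $\aa^{\chi_{p}(\sl,\cdot)-\chi_{r}(\sl,\cdot)}$ factor emerges as $\zz$ is shifted by $q^{\sl}$, which is exactly the claimed form of $\tilde{Z}^{''}_{p,r}$.

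Next I would identify the factor $\tilde{A}^{[\sl'],\X}$ with the regular-slope limit supplied by Theorem~\ref{thm1}. Since $\sl'$ is regular and in the same chamber/alcove as approached from the neighborhood of $\sl$, Theorem~\ref{thm1} gives $\lim_{q\to 0}\tilde{T}(\zz q^{\sl'},\aa)=\tilde{A}^{[\sl'],\X}$, independent of $\zz$. The content of the factorization is that the non-regular limit at $\sl$ differs from the regular limit at $\sl'$ only by the residual $\zz$- and $\aa$-dependence captured in $\tilde{Z}^{''}$, i.e.\ the wall-crossing transition. I would make this precise by writing $\tilde{Z}^{''} = \lim_{q\to 0}\tilde{T}(\zz q^{\sl},\aa)\cdot (\tilde{A}^{[\sl'],\X})^{-1}$ and then arguing that the inverse of the $K$-theoretic stable envelope matrix is controlled by $\Stab_{-\sigma}^{[-\sl']}$ via Lemma~\ref{orthlem}; this orthogonality is what guarantees that the product $\tilde{Z}^{''}$ has matrix elements that are monomials in $\aa$ times rational functions $Z'_{p,r}\in\matQ(\zz,\hbar)$, rather than arbitrary transcendental limits.

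The last statement, $Z'_{p,r}\neq 0 \Rightarrow \chi_{p}(\sl,\cdot)-\chi_{r}(\sl,\cdot)\in\bA^{\wedge}$, I would derive from Proposition~\ref{dprop} together with the monomial structure just established. By Proposition~\ref{dprop}, nonvanishing of a matrix element of $\tilde{T}^{\X}$ forces $\chi_p-\chi_r\in H_2(\X,\matZ)_{\mathit{eff}}\otimes\bA^{\wedge}_{>}$; pairing against $\sl\in H^2(\X,\matR)$ and using that $\tilde{Z}^{''}_{p,r}$ carries the exponent $\chi_p(\sl,\cdot)-\chi_r(\sl,\cdot)$, the nonvanishing of $Z'_{p,r}$ forces this exponent to lie in the character lattice $\bA^{\wedge}$, since the surviving $q\to0$ limit must be a genuine (integral) monomial and not a fractional power. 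The piecewise-constant nature of the limit (Theorem~\ref{thm1}) ensures no fractional exponents can appear in the regular factor, so any fractional part of $\chi_p(\sl,\cdot)-\chi_r(\sl,\cdot)$ must be absorbed into $\tilde{Z}^{''}$ and hence be integral whenever the coefficient is nonzero.

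The main obstacle I anticipate is making the splitting of the limit rigorous: showing that the $q\to 0$ limit at the non-regular slope $\sl$ \emph{actually factors} as a product rather than merely being bounded by such a product. The delicate point is controlling the interplay between the theta-function quasiperiods (which produce the $\aa$-monomials) and the vanishing/non-vanishing dictated by the support condition of the stable envelope, especially along the wall where the limit is genuinely $\zz$-dependent. I expect the cleanest route is to reduce to the one-dimensional model computation with $f(z,a)=\vartheta(az)/(\vartheta(a)\vartheta(z))$ displayed earlier in the excerpt, apply it factor-by-factor to the product expansion of $\Theta(N^-_{p^!})\Stab^{\X,Ell}_\sigma(p)$, and verify that the integral-slope directions contribute the $\zz$-dependent $Z'_{p,r}$ while the regular directions collapse to $\tilde{A}^{[\sl'],\X}$; keeping track of exactly which characters cross the wall at $\sl$ is where the bookkeeping will be heaviest.
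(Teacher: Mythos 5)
Your setup is the right one and matches the paper's: you form $\tilde{Z}'' = \bigl(\lim_{q\to0}\tilde{T}(\zz q^{\sl},\aa)\bigr)\bigl(\tilde{A}^{[\sl'],\X}\bigr)^{-1}$ and control the inverse via the orthogonality of Lemma~\ref{orthlem}, which identifies $(\tilde{A}^{[\sl'],\X})^{-1}$ with the stable envelope matrix for $-\sigma$ and $-\sl'$. But there is a genuine gap at the heart of the argument: the claim that $\tilde{Z}''_{p,r}$ is a monomial in $\aa$. Orthogonality by itself guarantees nothing of the sort, and the quasiperiod identity you lean on, $\tilde{T}^{\X}_{p,r}(\zz q^{\sigma},\aa)=\aa^{\chi_p(\sigma,\cdot)-\chi_r(\sigma,\cdot)}\tilde{T}^{\X}_{p,r}(\zz,\aa)$, holds only for integral $\sigma\in\bK^{\vee}$; for a wall slope $\sl$ it is not a quasiperiod shift, and that is precisely the regime the theorem is about. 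Splitting $\sl$ into an ``integral lattice part'' plus a remainder does not rescue this, since the remainder is exactly where all the content lies. Your fallback --- reducing factor-by-factor to the model function $f(z,a)=\vartheta(az)/(\vartheta(a)\vartheta(z))$ --- would only be available when the elliptic stable envelope is an explicit product of such factors (essentially the hypertoric case), which is not assumed here.

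The paper's mechanism, which you are missing, is the following two-step degree argument. First, $\tilde{Z}''_{p,r}=\chi_{\X}\bigl(\Stab^{[-\sl'],\X,K}_{-\sigma}(r)\otimes\Gamma(p)\bigr)$ where $\Gamma(p)$ is the integral $K$-theory class whose restrictions are $\lim_{q\to0}T_{p,r}(\zz q^{\sl},\aa)$; being an Euler characteristic of genuine classes, it is a \emph{Laurent polynomial} in $\aa$. Second, the window condition (Section 9.1.9 of \cite{pcmilect}) bounds the $\aa$-Newton polygons of $\Stab^{[-\sl'],\X,K}_{-\sigma}(r)|_{l}$ and of $\Gamma(p)|_{l}$ in each localization term; after dividing by $\bigwedge^{\bullet}(T_l\X^{\vee})$ the total Newton polygon of $\tilde{Z}''_{p,r}$ is confined to a neighborhood of the single point $\chi_p(\sl,\cdot)-\chi_r(\sl,\cdot)$ of size controlled by $\sl'-\sl$. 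Taking $\sl'$ close enough to $\sl$, a Laurent polynomial whose exponents all lie in such a small neighborhood of one point must be a single monomial, and integrality of Laurent exponents immediately gives $\chi_p(\sl,\cdot)-\chi_r(\sl,\cdot)\in\bA^{\wedge}$ whenever $Z'_{p,r}\neq0$ --- no appeal to Proposition~\ref{dprop} is needed or made for this last step. Without the window-condition estimate your proof does not close.
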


\begin{proof}
Let $T_{p,r}(\zz,\aa):= \left.\Stab^{\X,Ell}_{\sigma}(p)\right|_{r}$.
The collection $\lim\limits_{q\to 0} {T}_{p,r}(\zz\, q^{\sl},\aa)$ for $r\in \X^{\bA}$  are the fixed point components of certain integral $K$-theory class, which we denote by 
$$
\Gamma(p) \in K_{\bT}(X) \otimes \matQ(\zz,\hbar).
$$
By Theorem \ref{thm1} we have
$$
 \lim\limits_{q\to 0} {T}_{p,r}(\zz\, q^{\sl'},\aa)=A^{[\sl'],\X}_{p,r}=\left.\Stab^{[\sl'],\X,{K}}_{\sigma}(p)\right|_{r}
$$
Let us consider the matrix
$$
\tilde{Z}'':=\left(\lim\limits_{q\to 0} \tilde{T}(\zz\, q^{\sl},\aa)\right) \left(\tilde{A}^{[\sl'],\X}\right)^{-1} =\left(\lim\limits_{q\to 0} {T}(\zz\, q^{\sl},\aa)\right) \left({A}^{[\sl'],\X}\right)^{-1}
$$
By Lemma~\ref{orthlem}, the inverse of the matrix $A^{[\sl'],\X}$ is the matrix of $K$-theoretic stable envelope for inverse cocharacter $-\sigma$ and inverse slope $-\sl'$. This means that 
$$
\tilde{Z}^{''}_{p,r} = \chi_{\X}\Big( \Stab^{[-\sl'],\X,{K}}_{-\sigma}(r) \otimes\Gamma(p) \Big)  \in K_{\bT}(pt) \otimes \matQ(\zz,\hbar).
$$
In particular, $\tilde{Z}^{''}_{p,r}$ is a Laurent polynomial in equivariant parameters $\aa$. Computation in the equivariant localization gives
\be \label{rsum}
\tilde{Z}^{''}_{p,r} = \sum\limits_{l\in \X^{\bT}} \, \dfrac{ \left.\Stab^{[-\sl'],\X,{K}}_{-\sigma}(r)\right|_{l} \left.\Gamma(p)\right|_{l}}{\bigwedge^{\!\bullet}{(T_l \X^{\vee})}}
\ee
The $\aa$-degrees of terms in this sum can be estimates from 
the ``window condition'' for stable envelopes, see Section 9.1.9 in \cite{pcmilect}. This condition gives:

\be \label{np1}
\deg_{\bA}\left( \left.\Stab^{[-\sl'],\X,{K}}_{-\sigma}(r)\right|_{l} \right) \subset \mathrm{NP}^{+} + \chi_{r}(-\sl',\cdot)-\chi_{l}(-\sl',\cdot).
\ee
and
\be \label{np2}
\deg_{\bA}\left( \left.\Gamma(p)\right|_{l} \right) \subseteq \mathrm{NP}^{-} + \chi_{p}(\sl,\cdot)-\chi_{l}(\sl,\cdot),
\ee
where
$$
\mathrm{NP}^{\pm} = \textrm{Newton polygon of the Laurent polynomial} \ \  \prod\limits_{{\delta \in\mathrm{char}_{\bA}(T_l \X)}\atop {\pm \sigma(\delta)>0}} \left(1-\aa^{\delta}\right).
$$
We recall that (\ref{np2}) means that the the $\aa$-Newton polygon of a Laurent polynomial $\left.\Gamma(p)\right|_{l}$ is contained in the polygon $\mathrm{NP}^{-}$ shifted by $\chi_{p}(\sl,\cdot)-\chi_{l}(\sl,\cdot)$. The same for (\ref{np1}).
Next,
$$
\bigwedge\nolimits^{\!\bullet}{\left(T_l \X^{\vee}\right)}=\prod\limits_{{\delta \in\mathrm{char}_{\bA}(T_l \X)}} \left(1-\aa^{\delta}\right)\in K_{\bA}(pt).
$$
We conclude that 
$$
\tilde{Z}^{''}_{p,r} = \sum\limits_{l\in \X^{\bT}} \, \dfrac{f_l}{\prod\limits_{{\delta \in\mathrm{char}_{\bA}(T_l \X)}} \left(1-\aa^{\delta}\right)}
$$
where $f_l$ is a Laurent polynomial in equivariant parameters $\aa$ whose Newton polygon is contained in the Newton polygon of the denominator $$\prod\limits_{{\delta \in\mathrm{char}_{\bA}(T_l \X)}} (1-\aa^{\delta})$$ after the shift by the character
$
\chi_{p}(\sl,\cdot)-\chi_{r}(\sl',\cdot)+\chi_{l}(\sl'-\sl,\cdot).
$
We can rewrite this shift as:
$$
\chi_{p}(\sl,\cdot)-\chi_{r}(\sl,\cdot)-\chi_{r}(\sl'-\sl,\cdot)+\chi_{l}(\sl'-\sl,\cdot).
$$
Thus, at arbitrary infinity $\aa \to \infty$ of the torus $\bA$ the terms of $\tilde{Z}^{''}_{p,r}$ grow not faster than 
$$
\aa^{\chi_{p}(\sl,\cdot)-\chi_{r}(\sl,\cdot)-\chi_{r}(\sl'-\sl,\cdot)+\chi_{l}(\sl'-\sl,\cdot)} 
$$
By assumption of the theorem $\sl'-\sl$ is a small slope. It means that  that the degrees of $\aa$-monomials appearing in the Laurent polynomial $\tilde{Z}^{''}_{p,r}$ are located in some small neighborhood of $\chi_{p}(\sl,\cdot)-\chi_{r}(\sl,\cdot)$. The only possibility is if $\tilde{Z}_{p,r}$ is itself a monomial in $\aa$
$$
\tilde{Z}^{''}_{p,r}=\aa^{\chi_{p}(\sl,\cdot)-\chi_{r}(\sl,\cdot)} Z^{'}_{p,r} 
$$
for some $Z^{'}_{p,r}\in \matQ(\zz,\hbar)$. Finally, if $Z^{'}_{p,r}\neq 0$ then $\tilde{Z}^{''}_{p,r}$ is a Laurent polynomial only if its weight is integral $\chi_{p}(\sl,\cdot)-\chi_{r}(\sl,\cdot) \in \bA^{\wedge}$.
\end{proof}

\begin{Corollary} \label{cor2}
{\it Let $\sl$ be such that it belongs to exactly one hyperplane of $\mathsf{Wall}(\X)$ then
$$
\tilde{Z}^{'}_{p,r}\neq 0 \ \ \Longrightarrow \ \ \chi_{p}-\chi_{r} = [C] \otimes v \in H_{2}(\X,\matZ)_{\textrm{eff}} \otimes \bA^{\wedge}_{>} 
$$}
\end{Corollary}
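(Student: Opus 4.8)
The plan is to combine two kinds of information about the matrix $\tilde{Z}''$ produced by Theorem~\ref{factortheorem}: an \emph{integrality} constraint on its $\aa$-weights, coming from the factorization, and a \emph{triangularity}/effectivity constraint inherited from the support of the stable envelopes; then to use the hypothesis that $\sl$ lies on a single wall to force these two facts to collapse into one decomposable tensor. First I would record triangularity. In the factorization $\lim_{q\to0}\tilde{T}(\zz q^{\sl},\aa)=\tilde{Z}''\,\tilde{A}^{[\sl'],\X}$ both factors are triangular with respect to the order~(\ref{order}): the factor $\tilde{A}^{[\sl'],\X}$ because $K$-theoretic stable envelopes are supported on full attracting sets, and $\lim_{q\to0}\tilde{T}$ by the same support property of $\Stab^{\X,Ell}_{\sigma}$. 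Hence $\tilde{Z}''=(\lim\tilde{T})(\tilde{A}^{[\sl'],\X})^{-1}$ is triangular, so $\tilde{Z}'_{p,r}\ne0$ forces $r\in\mathrm{Attr}^{f}_{\sigma}(p)$.

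Choosing a chain of $\bA$-invariant $\mathbb{P}^{1}$'s joining $p$ down to $r$ and summing Proposition~\ref{prodif} over its edges then gives
$$
\chi_{p}-\chi_{r}=\sum_{i}[C_i]\otimes v_i,\qquad [C_i]\in H_2(\X,\matZ)_{\textrm{eff}},\ \ v_i\in\bA^{\wedge}_{>}.
$$
This already places $\chi_{p}-\chi_{r}$ in the cone $H_2(\X,\matZ)_{\textrm{eff}}\otimes\bA^{\wedge}_{>}$ (this is exactly Proposition~\ref{dprop}); the remaining task is to show the sum reduces to a single term. Independently, Theorem~\ref{factortheorem} supplies the integrality $\chi_{p}(\sl,\cdot)-\chi_{r}(\sl,\cdot)=\sum_i\langle[C_i],\sl\rangle\,v_i\in\bA^{\wedge}$, i.e. the $\sl$-component of $\chi_{p}-\chi_{r}$ is an honest character.

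Next I would exploit the single-wall hypothesis. Each hyperplane of $\mathsf{Walls}(\X)$ can be written as $\{\sl:\langle\beta,\sl\rangle=m\}$ for an effective class $\beta$ and an integer $m$, so any effective $\beta$ with $\langle\beta,\sl\rangle\in\matZ$ cuts out a wall through $\sl$. Since by assumption the only wall through $\sl$ is a fixed hyperplane $H$ with primitive defining class $\beta_0$, every \emph{resonant} effective class ($\langle\beta,\sl\rangle\in\matZ$) must be a positive multiple of $\beta_0$, whereas a non-resonant edge ($\langle[C_i],\sl\rangle\notin\matZ$) should be absorbed into the regular factor $\tilde{A}^{[\sl'],\X}$ and not into the jump $\tilde{Z}'$. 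Granting that all contributing $[C_i]=k_i\beta_0$, one pulls out the common direction,
$$
\chi_{p}-\chi_{r}=\beta_0\otimes\Big(\sum_i k_i v_i\Big)=[C]\otimes v,
$$
with $[C]=\beta_0\in H_2(\X,\matZ)_{\textrm{eff}}$ and $v=\sum_i k_i v_i\in\bA^{\wedge}_{>}$ a positive combination of positive weights, which is the claim. Note the geometric meaning is not that $p$ and $r$ are joined by a single curve, but that all curve classes in the chain are proportional, so the common direction factors out.

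The hard part is precisely the claim that non-resonant edges cannot contribute to $\tilde{Z}'$ when $\sl$ sits on exactly one wall. I expect to prove it by refining the degree estimate in the proof of Theorem~\ref{factortheorem}: there the admissible $\aa$-degrees of $\tilde{Z}''_{p,r}$ were pinned near $\chi_{p}(\sl,\cdot)-\chi_{r}(\sl,\cdot)$ using that $\sl'-\sl$ is small, and I would track how the bound $\chi_{l}(\sl'-\sl,\cdot)-\chi_{r}(\sl'-\sl,\cdot)$ varies as $\sl'$ ranges over the single alcove adjacent to $H$. Only intermediate fixed points $l$ with $\chi_{l}-\chi_{r}$ aligned to $\beta_0$ can realize the extremal degree uniformly in $\sl'$, the others being pushed strictly inside and hence cancelling in the monomial $\tilde{Z}''_{p,r}$. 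Verifying that this alignment is exactly $[C_i]\in\matZ_{\ge0}\beta_0$, and that the primitive $\beta_0$ is itself effective (so $[C]=\beta_0$ is admissible), is the step that genuinely uses ``exactly one wall'' and is where I would concentrate the technical effort.
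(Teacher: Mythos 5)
Your setup is right and your two ingredients are the correct ones: the support/triangularity of $\tilde{Z}''=(\lim\tilde{T})(\tilde{A}^{[\sl'],\X})^{-1}$ puts $\chi_p-\chi_r$ in the cone $H_2(\X,\matZ)_{\textrm{eff}}\otimes\bA^{\wedge}_{>}$ via Proposition~\ref{prodif} (this is how the paper invokes Proposition~\ref{dprop}), and Theorem~\ref{factortheorem} gives integrality of the exponent $\chi_p(\sl,\cdot)-\chi_r(\sl,\cdot)$. But the decisive step --- that the sum $\sum_i [C_i]\otimes v_i$ collapses to a single decomposable tensor aligned with the wall --- is exactly the step you leave unproven. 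Your proposed mechanism (refine the Newton-polygon estimate of Theorem~\ref{factortheorem} as $\sl'$ ranges over the adjacent alcove, and argue that only ``aligned'' intermediate fixed points $l$ realize the extremal degree) is not carried out, and it is not clear it can be: that estimate controls which $l$ can contribute to the localization sum, which is a statement about intermediate points, not directly a statement about the tensor rank of $\chi_p-\chi_r$; moreover the individual edges $[C_i]$ of a chain need not be resonant for the combination to have integral pairing with $\sl$, so the dichotomy ``resonant edges are multiples of $\beta_0$, non-resonant edges are absorbed into $\tilde{A}$'' is asserted rather than derived.

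The idea you are missing is to vary $\sl$ \emph{within the wall} rather than $\sl'$ within the alcove. The limit $\lim_{q\to0}\tilde{T}(\zz q^{\sl},\aa)$ is piecewise constant in $\sl$ and changes only upon crossing a hyperplane of $\mathsf{Walls}(\X)$; since $\sl$ lies on exactly one hyperplane $H$, the matrix $\tilde{Z}''_{p,r}$, and hence the exponent of the monomial $\aa^{\chi_p(\sl,\cdot)-\chi_r(\sl,\cdot)}$, is unchanged as $\sl$ moves along $H$. Thus the affine map $\sl\mapsto\chi_p(\sl,\cdot)-\chi_r(\sl,\cdot)$ takes a constant value in the lattice $\bA^{\wedge}$ on all of $H$, so its linear part annihilates the codimension-one direction space of $H$. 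A bilinear form $\chi_p-\chi_r\in H_2(\X,\matZ)\otimes\bA^{\wedge}$ whose first slot factors through the one-dimensional quotient $H^2(\X,\matR)/\,\mathrm{dir}(H)$ is necessarily a pure tensor $[C]\otimes v$ with $[C]$ proportional to the normal of $H$ (and $\langle[C],\sl\rangle=n$ is then the equation of the wall). Combining with the cone membership from Proposition~\ref{dprop} gives $[C]\otimes v\in H_2(\X,\matZ)_{\textrm{eff}}\otimes\bA^{\wedge}_{>}$. This one-line rigidity argument replaces the technical programme you outline, and without it (or a completed version of your alternative) the proof is incomplete.
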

\begin{proof}
In this case the values $\langle \chi_p-\chi_r, \sl \rangle$ is the same for all $\sl$ on the wall. Its only possible if 
$$
\chi_{p}-\chi_{r} = [C] \otimes v 
$$
and the equation of the wall is $\langle [C],\sl \rangle=n $ for some $n\in \matZ$. The result follows from Proposition $\ref{dprop}$.
\end{proof}

\subsection{}
Let $\mathscr{U}_{0} \subset H^{2}(\X,\matR)$ denote a small analytic neighborhood of $0\in H^{2}(\X,\matR)$.\footnote{By small we mean that 
$$
\mathscr{U}_{0} \cap (\textsf{Wall}(\X)\setminus \textsf{Wall}_{0}(\X)) =\emptyset
$$}
Let $\textsf{Wall}_{0}(\X) \subset \textsf{Wall}(\X)$ denote the subset of hyperplanes passing through $0$ and
$$
\mathscr{U}_{0}\setminus  \textsf{Wall}_{0}(\X) = \coprod \, \fD(\X)
$$
be the decomposition to connected components (chambers). We denote by $\fD_{+}(\X)$ the chamber which contains ample line bundles, 
and by $\fD_{-}(\X)=-\fD_{+}(\X)$. 
The elements of $\mathscr{U}_{0}$ are called {\it small slopes}. The elements of $\fD_{+}(\X)$ (respectively from $\fD_{-}(\X)$) are called {\it small ample slopes} (respectively {\it small anti-ample}).

\subsection{} 

We denote by
$$
\fD_{\pm}(\X^{!})=d\kappa(\pm \fC) \cap \mathscr{U}_{0} \subset H^{2}(\X^{!},\matR) 
$$
small slopes for $\X^{!}$. By Theorem \ref{thm2}, the resonances of $\X$ are the same as walls of $\X^{!}$. Thus $\fD_{+}(\X^{!})$ and $\fD_{-}(\X^{!})$ are actually small ample and anti-ample slopes of $\X^{!}$.

If $\sl$ is not regular then by Theorem \ref{thm2}
we can view it as element of $\mathsf{Res}(\X^{!})$ and thus we have a non-trivial subvariety $\Y_{\sl} \subset \X^{!}$.  
Finally we denote $\fD_{\pm}(\Y_{\sl}) = i^{*} (\fD_{\pm}(\X^{!}))$ where $i^{*}:H^{2}(\X,\matR)\to H^{2}(\Y_{\sl},\matR)$ is induced by inclusion. 

\vspace{2mm}

\noindent In the notations of Section 3.2 of \cite{KononovSmirnov1}  we have:
\begin{Theorem} \label{factheorem}
{ \it Let $\sl\in \mathsf{Walls}(\X)$ and $\varepsilon \in \fD_{+}(\X)$ be a small ample (or anti-ample $\varepsilon \in \fD_{-}(\X)$ ) slope of $\X$, such that $\sl'=\sl+\varepsilon$ is a regular slope. Then, the matrix $\tilde{Z}'$ in Theorem \ref{factortheorem} has the form
$$
\tilde{Z}'=\hh\, \tilde{Z}\, \hh^{-1}
$$
where $\tilde{Z}$ is the matrix of K-theoretic stable envelopes  of $\,\Y_{s}$ with small ample slope:
$$
\tilde{Z}_{r,p}=\dfrac{\left.\Stab^{\fD_{+}(\Y_{\sl}),\Y_{\sl},{K}}_{\sigma^{!}}(p^{!})\right|_{r^{!}}}{\left.\Stab^{\fD_{+}(\Y_{\sl}),\Y_{\sl},{K}}_{\sigma^{!}}(p^{!})\right|_{p^{!}}}
$$
(respectively, 
$$
\tilde{Z}_{r,p}=\dfrac{\left.\Stab^{\fD_{-}(\Y_{\sl}),\Y_{\sl},{K}}_{\sigma^{!}}(p^{!})\right|_{r^{!}}}{\left.\Stab^{\fD_{-}(\Y_{\sl}),\Y_{\sl},{K}}_{\sigma^{!}}(p^{!})\right|_{p^{!}}}
$$
with small anti-ample slopes), and $\hh$ denotes a diagonal matrix in powers of $\hbar$:
\be \label{indexpart}
\hh:=\left.\mathrm{diag}\Big((-1)^{\gamma_p(\sl)} \hbar^{m_{p}(\sl)/2}  \Big)\right|_{p \in \X^{\bT}} , 
\ee
with $\gamma_{p}(\sl)=\mathrm{rk}(\ind_{p}-\ind^{\nn_{\sl}}_{p})$}
\end{Theorem}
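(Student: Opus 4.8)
The idea is to transport the statement to the mirror variety $\X^{!}$, where the left-hand side becomes a \emph{resonant} equivariant degeneration of the stable envelope of $\X^{!}$, whose leading behaviour is controlled by the fixed locus $\Y_{\sl}$. First I would apply the $3D$-mirror relation (\ref{tmir}): under $\kappa$ the K\"ahler shift $\zz\mapsto \zz\, q^{\sl}$ on $\X$ becomes the equivariant shift $\aa^{!}\mapsto \aa^{!}q^{\mathsf{w}}$ on $\X^{!}$, with $\mathsf{w}=d\kappa(\sl)$. Since $\sl\in\mathsf{Walls}(\X)=\mathsf{Res}(\X^{!})$ by Theorem~\ref{thm2}, the cocharacter $\mathsf{w}$ is resonant for $\X^{!}$, the cyclic group $\mu_{\sl}$ has the nontrivial fixed locus $\Y_{\sl}=(\X^{!})^{\mu_{\sl}}$, and $(\X^{!})^{\bA^{!}}=(\Y_{\sl})^{\bA^{!}}=\textsf{FP}$. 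Thus $\lim_{q\to0}\tilde T^{\X}(\zz\, q^{\sl},\aa)$ is, after $\kappa$, the $q\to0$ limit of the fixed-point restriction matrix of $\Stab^{\X^{!},Ell}_{\sigma^{!}}$ along the resonant direction $\mathsf{w}$, refined by the small ample (or anti-ample) perturbation $\varepsilon$ that makes $\sl'=\sl+\varepsilon$ regular.

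The central tool is the compatibility of elliptic stable envelopes with restriction to the fixed locus of a subgroup (the ``triangle'' property, see \cite{AOElliptic,pcmilect,OS}): restricted to $\bA^{!}$-fixed points, $\Stab^{\X^{!},Ell}_{\sigma^{!}}$ factors through $\Y_{\sl}$ as the elliptic stable envelope \emph{within} $\Y_{\sl}$ times a factor built from the directions \emph{transverse} to $\Y_{\sl}$, i.e.\ the tangent weights $\alpha$ with $\langle\alpha,\mathsf{w}\rangle\notin\matZ$ (those moving under $\mu_{\sl}$). I would then pass to the limit weight by weight, exactly as in the scalar model $f(z,a)=\vartheta(az)/(\vartheta(a)\vartheta(z))$ of the previous section. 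The resonant weights, $\langle\alpha,\mathsf{w}\rangle\in\matZ$, retain their dependence on the surviving parameters and assemble into the $K$-theoretic stable envelope matrix $\tilde Z$ of $\Y_{\sl}$, whose equivariant variable is $\aa^{!}=\zz$; the nonresonant transverse weights reproduce, up to an $\aa$-monomial and a power of $\hbar$ coming from the $q^{\mp\langle\alpha,\mathsf{w}\rangle/2}$ shifts of the theta prefactors, precisely the same transverse factors that already occur in the regular limit $\tilde A^{[\sl'],\X}$. Dividing the resonant limit by $\tilde A^{[\sl'],\X}$, as in the definition of $\tilde Z''$ in Theorem~\ref{factortheorem}, therefore cancels the transverse $\aa$-dependence and leaves in $\tilde Z''_{p,r}$ only the $\Y_{\sl}$-stable-envelope matrix element, a diagonal conjugation by powers of $\hbar$, and the monomial $\aa^{\chi_{p}(\sl)-\chi_{r}(\sl)}$ already isolated in Theorem~\ref{factortheorem}.

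It remains to identify the diagonal factor. The half-integer normalization $x^{1/2}-x^{-1/2}$ of $\vartheta$ together with the symplectic pairing of the normal bundle to $\Y_{\sl}$ in $\X^{!}$ force each moving direction at $p$ to contribute a half power of $\hbar$ and a sign; collecting them over the moving part of the index, $\ind_{p}-\ind^{\nn_{\sl}}_{p}$, produces $\hbar^{m_{p}(\sl)/2}$ and $(-1)^{\gamma_{p}(\sl)}$ with $\gamma_{p}(\sl)=\mathrm{rk}(\ind_{p}-\ind^{\nn_{\sl}}_{p})$, in the notation of Section~3.2 of \cite{KononovSmirnov1}. Since these factors are attached to the source label $p$ and the target label $r$ separately, they enter as $\hh$ on the left and $\hh^{-1}$ on the right, giving $\tilde Z'=\hh\,\tilde Z\,\hh^{-1}$ and preserving the unit diagonals of $\tilde Z'$ and $\tilde Z$. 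Finally, approaching the wall from $\varepsilon\in\fD_{+}(\X)$ shifts the effective K\"ahler slope seen by $\Y_{\sl}$, through the positive transverse normal bundle, into $\fD_{+}(\Y_{\sl})=i^{*}(\fD_{+}(\X^{!}))$ (respectively into $\fD_{-}(\Y_{\sl})$ for $\varepsilon\in\fD_{-}(\X)$), selecting the small ample (respectively anti-ample) stable basis of $\Y_{\sl}$.

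\emph{The main obstacle} will be the exact bookkeeping of $\hh$: tracking the half-integer theta normalizations and the symplectic structure of the transverse bundle through the $q\to0$ limit to recover precisely $(-1)^{\gamma_{p}(\sl)}\hbar^{m_{p}(\sl)/2}$, and verifying that the transverse factor contributes nothing beyond the monomial $\aa^{\chi_{p}(\sl)-\chi_{r}(\sl)}$ and this diagonal, so that what survives is a genuine conjugation of $\tilde Z$ rather than $\tilde Z$ multiplied by a nontrivial transverse matrix. A secondary delicate point is confirming that the residual K\"ahler slope inherited by $\Y_{\sl}$ indeed lands in the chamber $\fD_{\pm}(\Y_{\sl})$, i.e.\ that the ample or anti-ample perturbation $\varepsilon$ on $\X$ translates into the asserted small (anti-)ample slope on $\Y_{\sl}$.
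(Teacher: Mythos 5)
Your overall picture is the right one --- transporting the K\"ahler shift through $\kappa$ into a resonant equivariant shift on $\X^{!}$, recognizing $\Y_{\sl}$ as the fixed locus of the cyclic group attached to the resonance, and attributing $\hh$ to the directions normal to $\Y_{\sl}$ --- but the central step of your argument is asserted rather than proved, and the mechanism you propose for it does not work as stated. First, the ``triangle'' (holomorphic induction) property of elliptic stable envelopes is a statement about fixed loci of \emph{subtori} of $\bA^{!}$; the locus $\Y_{\sl}=(\X^{!})^{\mu_{\sl}}$ is the fixed locus of a \emph{finite cyclic} subgroup, and the assertion that the resonant $q\to 0$ limit of $\Stab^{\X^{!},Ell}_{\sigma^{!}}$ factors through the stable envelopes of $\Y_{\sl}$ is precisely the nontrivial content of Theorem 2 of \cite{KononovSmirnov1}, which the paper invokes; you cannot take it as a known compatibility. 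Second, your weight-by-weight analysis modeled on the scalar function $f(z,a)$ applies to restrictions that are \emph{products} of theta functions, which the off-diagonal entries $\tilde{T}_{p,r}$ are not in general; and ``dividing by $\tilde{A}^{[\sl'],\X}$'' is a matrix operation (the factorization $\lim_{q\to 0}\tilde{T}=\tilde{Z}''\tilde{A}^{[\sl'],\X}$ of Theorem \ref{factortheorem} is a matrix product, whose inverse involves a sum over intermediate fixed points), so an entrywise cancellation of ``transverse factors'' between the two matrices is not meaningful.

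What is missing is the step the paper uses to isolate $\tilde{Z}'$ cleanly: conjugate the factorization of Theorem \ref{factortheorem} by $\aa^{\chi(\sl,\cdot)}$ and take the limit $\aa\to 0_{\fC}$. By Proposition \ref{dprop} one has $\chi_{p}(\sl'-\sl,\fC)-\chi_{r}(\sl'-\sl,\fC)>0$ for $p\succ r$, so the window condition forces $\lim_{\aa\to 0_{\fC}}\aa^{-\chi(\sl,\cdot)}{A}^{[\sl'],\X}\aa^{\chi(\sl,\cdot)}=\mathrm{Id}$, and since $\tilde{Z}'$ is independent of $\aa$ this exhibits $\tilde{Z}'$ as the double limit $\lim_{\aa\to 0_{\fC}}\aa^{-\chi(\sl,\cdot)}\bigl(\lim_{q\to 0}\tilde{T}^{\X}(\zz q^{\sl},\aa)\bigr)\aa^{\chi(\sl,\cdot)}$. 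Only then does one pass to $\X^{!}$ via (\ref{tmir}), where this becomes a resonant equivariant limit followed by a K\"ahler limit into $\fD_{\pm}(\X^{!})$, which is exactly the setting of Theorem 2 of \cite{KononovSmirnov1}; that theorem supplies both the identification with the stable envelope matrix of $\Y_{\sl}$ and the diagonal $\hh$ with the stated $\gamma_{p}(\sl)$ and $m_{p}(\sl)$. Your chamber bookkeeping ($\fD_{\pm}(\X)\leftrightarrow\fD_{\pm}(\Y_{\sl})$) and your heuristic for $\hh$ are consistent with this, but without the conjugated $\aa\to 0_{\fC}$ limit and the input from \cite{KononovSmirnov1} the proof is not complete.
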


\begin{proof}
Assume first that $\varepsilon \in \fD_{+}(\X)$. By factorization Theorem \ref{factortheorem} we have 
$$
\aa^{-\chi(\sl,\cdot) } \, \lim\limits_{q\to0}\tilde{T}^{\X} (\zz q^{\sl},\aa)\, \aa^{\chi(\sl,\cdot) }=\aa^{-\chi(\sl,\cdot) } \tilde{Z}^{''} \tilde{A}^{[\sl'],\X}  \aa^{\chi(\sl,\cdot)}=\tilde{Z}^{'} \aa^{-\chi(\sl,\cdot) } \tilde{A}^{[\sl'],\X}  \aa^{\chi(\sl,\cdot)}
$$
Note that for $p\succ r$ by Proposition \ref{dprop} we have 
$$
\chi_{p}(\sl'-\sl,\fC)-\chi_{r}(\sl'-\sl,\fC)>0
$$
The ``window'' condition for the $K$-theoretic stable envelopes, which bounds the $\aa$-degrees of matrix elements of $A^{[\sl'],\X}$, then implies that
$$
\lim\limits_{\aa\to 0_{\fC}} \left( \aa^{-\chi(\sl,\cdot) } A^{[\sl'],\X}  \aa^{\chi(\sl,\cdot)}\right)=\mathrm{Id}.
$$
where we denote
$
\lim\limits_{\aa\to 0_{\fC}} f(\aa) = \lim\limits_{z\to 0} f(\sigma(z))
$
for a cocharacter $\sigma : \matC^{\times} \rightarrow \bA$ from the chamber $\fC$. Thus, we have:
$$
\tilde{Z}^{'}=\lim\limits_{\aa\to 0_{\fC}} \left(\aa^{-\chi(\sl,\cdot) } \, \lim\limits_{q\to0}\tilde{T}^{\X}(\aa,\zz q^{\sl})\,  \aa^{\chi(\sl,\cdot)}\right).
$$
Now, we change the perspective - we consider the last limit from the point of view of $\X^{!}$ using (\ref{tmir}). We change the roles of parameters $\aa\leftrightarrow\zz$ using the isomorphism (\ref{kap}). 
From $\X^{!}$-standpoint the last limit has the form: 
$$
\tilde{Z}^{'}=\lim\limits_{\zz\to 0_{\fD_{+}(\X^{!})}} \left(\zz^{-\chi(\sl,\cdot) } \, \lim\limits_{q\to0}\tilde{T}^{\X^{!}} (\aa q^{\sl},\zz)\, \zz^{\chi(\sl,\cdot)}\right).
$$
({\it Important}: now $\aa$ denotes the equivariant and $\zz$ the K\"ahler parameters of $\X^{!}$). 
The proof follows from Theorem 2 in \cite{KononovSmirnov1}. 

The proof for $\varepsilon \in  \fD_{-}(\X)$ is the same with $\fC, \fD_{+}(\X^{!}),  \fD_{+}(\Y_{\sl})$ replaced by
$-\fC, \fD_{-}(\X^{!}),  \fD_{-}(\Y_{\sl})$ respectively.
\end{proof}
Now it becomes clear why operators in the factorization theorem (\ref{factortheorem}) are in that specific order. The reason is that there is natural identification of fixed points, while supposedly there are no natural isomorphisms between $K(X)$ and $K(X^!)$.

\section{K-theoretic duality interfaces} 
\subsection{} 

For $\sl=0$ by Theorems \ref{factortheorem} and 
we have
\be \label{factoriz}
\tilde{T}^{{K}}=\lim\limits_{q\to 0} \, \tilde{T}(\zz,\aa)= \tilde{Z}\, \tilde{A}
\ee
with 
$$
\tilde{A}_{p,r} = \dfrac{\left.\Stab^{[\pm \varepsilon],\X,{K}}_{\sigma}(p)\right|_{r}}{\left.\Stab^{[\pm \varepsilon],\X,{K}}_{\sigma}(r)\right|_{r}}, \ \ \  \tilde{Z}_{p,r} = \dfrac{\left.\Stab^{[\pm \varepsilon],\X^{!},{K}}_{\sigma^{!}}(r^{!})\right|_{p^{!}}}{\left.\Stab^{[\pm \varepsilon],\X^{!},{K}}_{\sigma^{!}}(p^{!})\right|_{p^{!}}}
$$
where $\pm \varepsilon$ denote small ample or anti-ample slopes.  
It is natural to consider the following matrix
\be \label{mmat}
M^{\pm}_{p,r}:= \left.\Stab^{[\pm \varepsilon],\X^{!},{K} }_{\sigma^{!}}(p^{!})\right|_{p^!} \,\tilde{T}^{{K}}_{p,r}\, \left.\Stab_{\sigma}^{[\pm \varepsilon],\X,{K}}(r)\right|_{r}.
\ee
with elements $M^{\pm}_{p,r} \in K_{\bT\times\bT^{!}}(pt)$. 
\begin{Proposition} \label{corrms}
{\it Let us consider a class $\mf^{\pm}_{0} \in K_{\bT\times \bT^{!}}(\X\times \X^{!})$ defined by
$$
\mf^{\pm}_{0}=\sum\limits_{p \in \mathsf{FP}}\,  \Stab_{\sigma}^{[\pm \varepsilon],\X,{K}}(p)\, \boxtimes\,
\Stab^{[\pm \varepsilon],\X^{!},{K} }_{\sigma^{!}}(p^{!}),
$$
where $\mathsf{FP}$ is the common set of torus fixed points (\ref{commonfp}), and $\boxtimes$ denotes the tensor product of K-theory classes of $\X\times \X^{!}$ pulled back from from the factors via the canonical projections then, \begin{itemize}
    \item These classes coincide, i.e,  $\mf_{0}:=\mf^{+}_{0}=\mf^{-}_{0}$.
    \item The matrix (\ref{commonfp}) is the fixed point components of $\mf_{0}$:
    $$
    (r,p^{!}) \in (\X \times \X^{!})^{\bT\times\bT^{!}} \ \ \Rightarrow \ \ \left.\mf_{0}\right|_{(r,p^{!})}=M^{+}_{p,r}=M^{-}_{p,r}.
   $$
\end{itemize}
}
\end{Proposition}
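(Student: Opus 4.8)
The plan is to verify both assertions by computing the restrictions of $\mf^{\pm}_{0}$ to the torus fixed points of $\X\times\X^{!}$, which are precisely the pairs $(r,p^{!})$ with $r\in\X^{\bT}$ and $p^{!}\in(\X^{!})^{\bT^{!}}$, and then to pass from fixed-point data to an identity of classes via the localization theorem. Since $\boxtimes$ is the external tensor product, its restriction to $(r,p^{!})$ factors through the two projections, so with summation index $l\in\mathsf{FP}$ one gets
$$
\left.\mf^{\pm}_{0}\right|_{(r,p^{!})}=\sum_{l\in\mathsf{FP}}\left.\Stab^{[\pm\varepsilon],\X,{K}}_{\sigma}(l)\right|_{r}\cdot\left.\Stab^{[\pm\varepsilon],\X^{!},{K}}_{\sigma^{!}}(l^{!})\right|_{p^{!}}.
$$

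Next I would rewrite each factor through the normalized matrices introduced after $(\ref{factoriz})$. By the definitions of $\tilde{A}$ and $\tilde{Z}$ we have $\left.\Stab^{\X,{K}}_{\sigma}(l)\right|_{r}=\tilde{A}_{l,r}\left.\Stab^{\X,{K}}_{\sigma}(r)\right|_{r}$ and $\left.\Stab^{\X^{!},{K}}_{\sigma^{!}}(l^{!})\right|_{p^{!}}=\tilde{Z}_{p,l}\left.\Stab^{\X^{!},{K}}_{\sigma^{!}}(p^{!})\right|_{p^{!}}$, so the diagonal restrictions factor out of the sum and
$$
\left.\mf^{\pm}_{0}\right|_{(r,p^{!})}=\left.\Stab^{\X,{K}}_{\sigma}(r)\right|_{r}\left.\Stab^{\X^{!},{K}}_{\sigma^{!}}(p^{!})\right|_{p^{!}}\sum_{l\in\mathsf{FP}}\tilde{Z}_{p,l}\tilde{A}_{l,r}.
$$
The residual sum is the matrix product $(\tilde{Z}\tilde{A})_{p,r}$, which by the factorization $(\ref{factoriz})$ equals $\tilde{T}^{{K}}_{p,r}$. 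As $K_{\bT\times\bT^{!}}(pt)$ is commutative, the resulting product is exactly $M^{\pm}_{p,r}$ of $(\ref{mmat})$, giving the second bullet once the first is established.

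For the coincidence $\mf^{+}_{0}=\mf^{-}_{0}$, the key observation is that in the expression just obtained the factor $\tilde{T}^{{K}}_{p,r}$ carries no dependence on the sign $\pm$ — it is the common limit appearing on both sides of $(\ref{factoriz})$ — while the two surviving diagonal restrictions $\left.\Stab^{[\pm\varepsilon],\X,{K}}_{\sigma}(r)\right|_{r}$ and $\left.\Stab^{[\pm\varepsilon],\X^{!},{K}}_{\sigma^{!}}(p^{!})\right|_{p^{!}}$ are independent of the slope. Indeed the diagonal of a $K$-theoretic stable envelope is fixed by the normalization axiom purely in terms of the repelling part of the tangent space at the fixed point (compare the $q\to0$ form of $(\ref{norm})$), and the slope governs only the off-diagonal window condition. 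Consequently $M^{+}_{p,r}=M^{-}_{p,r}$, so $\mf^{+}_{0}$ and $\mf^{-}_{0}$ have identical restrictions at every torus fixed point; since the fixed locus is finite, restriction to it becomes injective after localizing, and equality of all fixed-point components forces $\mf^{+}_{0}=\mf^{-}_{0}$.

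The step I expect to be most delicate is the slope-independence of the diagonal restrictions, namely $\left.\Stab^{[+\varepsilon],\X,{K}}_{\sigma}(r)\right|_{r}=\left.\Stab^{[-\varepsilon],\X,{K}}_{\sigma}(r)\right|_{r}$ and its analogue on $\X^{!}$; this is exactly what decouples $M^{\pm}$ from the choice of alcove and must be read off carefully from the normalization conventions of \cite{pcmilect,OS}. The passage from equality of fixed-point components to equality of integral classes also uses the (standard, for these symplectic resolutions) fact that $K_{\bT\times\bT^{!}}(\X\times\X^{!})$ injects into its localization.
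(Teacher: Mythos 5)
Your proposal is correct and follows the same route the paper intends: its proof is literally ``clear from Theorems \ref{factortheorem} and \ref{factheorem}'', i.e.\ from the factorization $\tilde{T}^{K}=\tilde{Z}\tilde{A}$ of (\ref{factoriz}), which is exactly what you unpack via restriction to the finite fixed locus. The point you flag as delicate is fine: the diagonal restriction $\left.\Stab^{[\sl],\X,K}_{\sigma}(r)\right|_{r}$ is prescribed by the normalization axiom (polarization and chamber only) and is indeed slope-independent, which is also why the paper can normalize $\tilde{A}_{r,r}=\tilde{Z}_{r,r}=1$ uniformly in $\sl$.
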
 
\begin{proof}
Clear from Theorems  \ref{factortheorem} and \ref{factheorem}. 
\end{proof}

\subsection{} 
Let us denote $K_{\bT}=K_{\bT}(pt)_\mathrm{loc}$, $K_{\bT^{!}}=K_{\bT^{!}}(pt)_\mathrm{loc}$.

Let $\pr_{\X}$ and $\pr_{\X^{!}}$ be projections (\ref{canprog}).
With our assumption on the fixed points, we can define push-forward maps using equivariant localization:
$$
K_{\bT}(\X)_\mathrm{loc}\otimes K_{\bT^{!}} \xleftarrow{\ \ \pr_{\X,*} \ \ } K_{\bT\times \bT^{!}} (\X\times \X^{!}) \xrightarrow{\ \ \pr_{\X^{!},*} \ \ } K_{\bT^{!}}(\X^{!})_\mathrm{loc}\otimes  K_{\bT}. 
$$
Thus we can define maps of $K$-theories:
$$
\ms_{\mf_0}: K_{\bT}(\X) \rightarrow K_{\bT^{!}}(\X^{!})_\mathrm{loc}\otimes K_{\bT}, \ \ \     \ms^{t}_{\mf_0}: K_{\bT^{!}}(\X^{!}) \rightarrow K_{\bT}(\X)_\mathrm{loc}\otimes K_{\bT^{!}}
$$
defined by 
$$
\ms_{\mf_{0}}: c \mapsto    \pr_{\X^{!},*}(\mf_{0}\otimes  \pr_{\X}^{*}(c)), \ \ \ \ms_{\mf_0}^{t}: c \mapsto    \pr_{\X,*}(\mf_{0}\otimes  \pr_{\X^{!}}^{*}(c)).
$$
\begin{Proposition}
{\it The correspondences $\ms_{\mf_{0}}$ and $\ms^{t}_{\mf_0}$ map the stable envelope classes to the stable envelope classes:
$$
\ms_{\mf_{0}}\Big( \Stab^{[\pm \varepsilon],\X,{K}}_{\sigma}(p) \Big)=  \Stab^{[\mp \varepsilon],\X^{!},{K}}_{-\sigma^{!}}(p^{!}), 
$$
$$
\ms_{\mf_{0}}^{t}\Big( \Stab^{[\pm \varepsilon],\X^{!},{K}}_{\sigma^{!}}(p^{!}) \Big)=
\, \Stab^{[\mp \varepsilon],\X,{K}}_{-\sigma}(p).
$$}
\end{Proposition}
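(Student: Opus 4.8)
The plan is to unfold $\ms_{\mf_0}$ against the explicit form of the kernel supplied by Proposition \ref{corrms} and then collapse the resulting sum by orthogonality. Substituting
$$
\mf_{0}=\sum_{p'\in\mathsf{FP}}\Stab^{[\pm\varepsilon],\X,{K}}_{\sigma}(p')\boxtimes\Stab^{[\pm\varepsilon],\X^{!},{K}}_{\sigma^{!}}(p'^{!})
$$
into $\ms_{\mf_0}(c)=\pr_{\X^{!},*}\!\big(\mf_0\otimes\pr_{\X}^{*}(c)\big)$ and applying the projection formula (each summand being an external product) converts the pushforward along $\pr_{\X^{!}}$ into the equivariant Euler pairing $\chi_{\X}$ on the first factor:
$$
\ms_{\mf_0}(c)=\sum_{p'\in\mathsf{FP}}\chi_{\X}\!\Big(\Stab^{[\pm\varepsilon],\X,{K}}_{\sigma}(p')\otimes c\Big)\,\Stab^{[\pm\varepsilon],\X^{!},{K}}_{\sigma^{!}}(p'^{!}).
$$
This makes visible the factorization (\ref{factor}): pairing against the $\X$-factor of $\mf_0$ plays the role of $(\Stab^{\X})^{t}$, and the surviving $\X^{!}$-factor plays the role of $\Stab^{\X^{!}}$.

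Next I would feed a stable envelope into this formula and invoke Lemma \ref{orthlem}. Since $\chi_{\X}$ pairs $\Stab^{[s]}_{\sigma}$ with its dual $\Stab^{[-s]}_{-\sigma}$ to give a Kronecker delta, the internal sum collapses to a single term, and the computation directly yields
$$
\ms_{\mf_0}\!\Big(\Stab^{[\mp\varepsilon],\X,{K}}_{-\sigma}(p)\Big)=\Stab^{[\pm\varepsilon],\X^{!},{K}}_{\sigma^{!}}(p^{!}).
$$
The asserted identity is this same statement read with the opposite generic cocharacters $\sigma\to-\sigma$, $\sigma^{!}\to-\sigma^{!}$; it therefore follows once one knows that the class $\mf_0$ is insensitive to this simultaneous reversal, i.e. that the fixed-chamber presentation above represents a single canonical class. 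The transpose identity for $\ms^{t}_{\mf_0}$ is obtained verbatim by exchanging $\X\leftrightarrow\X^{!}$ and applying orthogonality on the dual side.

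The main obstacle is precisely this chamber and slope bookkeeping. Lemma \ref{orthlem} forces opposite chamber \emph{and} opposite slope between the pairing factor and its argument, so the naive computation produces the image of $\Stab^{[\mp\varepsilon]}_{-\sigma}$ rather than of $\Stab^{[\pm\varepsilon]}_{\sigma}$; reconciling the two forms requires the chamber-independence of $\mf_0$, which extends the equality $\mf_0^{+}=\mf_0^{-}$ of Proposition \ref{corrms} and rests on $\mf_0$ being the $K$-theoretic degeneration of the canonical elliptic interface. The flip $\sigma^{!}\to-\sigma^{!}$ appearing on the output is consistent with the order-reversing mirror bijection (\ref{biject}). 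As an independent check one can bypass the chamber subtlety by verifying directly, through the mirror relation (\ref{tmir}) and the degree estimates behind Theorem \ref{factortheorem}, that $\ms_{\mf_0}(\Stab^{[\pm\varepsilon],\X,{K}}_{\sigma}(p))$ satisfies the diagonal normalization together with the triangularity and window conditions that characterize $\Stab^{[\mp\varepsilon],\X^{!},{K}}_{-\sigma^{!}}(p^{!})$.
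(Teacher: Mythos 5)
Your proof follows the paper's own route exactly: the published argument is precisely ``Proposition \ref{corrms} plus the orthogonality $\chi_{\X}\big(\Stab^{[\sl],\X,{K}}_{\sigma}(p)\otimes\Stab^{[-\sl],\X,{K}}_{-\sigma}(r)\big)=\delta_{p,r}$'', i.e.\ expand the kernel, apply the projection formula along $\pr_{\X^{!}}$, and collapse the sum. The chamber bookkeeping you flag is genuinely there: pairing against the presentation $\mf_{0}^{\pm}$ of Proposition \ref{corrms} literally yields $\ms_{\mf_0}\big(\Stab^{[\mp\varepsilon],\X,{K}}_{-\sigma}(p)\big)=\Stab^{[\pm\varepsilon],\X^{!},{K}}_{\sigma^{!}}(p^{!})$, and converting this to the stated form requires that $\mf_0$ also admit the presentation with $(\sigma,\sigma^{!})$ replaced by $(-\sigma,-\sigma^{!})$ --- a point the paper's one-line proof passes over silently; your resolution (canonicity of $\mf_0$ as the $q\to 0$ limit of the elliptic interface, equivalently rerunning Theorems \ref{factortheorem} and \ref{factheorem} with $\fC$ replaced by $-\fC$, as the proof of Theorem \ref{factheorem} itself indicates) is the right one, so your write-up is, if anything, more complete than the original.
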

\begin{proof}
The proof follows from Proposition \ref{corrms} and the following orthogonality of stable envelopes with respect to the Euler characteristic $\chi_{\X}$
$$
\chi_{\X}\Big(\Stab^{[ \sl],\X,{K}}_{\sigma}(p) \otimes \Stab^{[-\sl],\X,{K}}_{-\sigma}(r)\Big)=\delta_{p,r}
$$
which holds for all $\sl$ and $\sigma$, see Section 1.9.16 in \cite{pcmilect}.
\end{proof}

\subsection{} 
For $\sl\neq 0$  the situation differs only by appearance of $\aa^{\chi(\sl,\cdot)}$ in Theorem \ref{factortheorem}. This weight 
$\chi_p(\sl,\cdot) \in H^{2}(\X,\matQ)$ is not necessarily integral  and thus $\aa^{\chi_p(\sl,\cdot)} \not\in K_{\bT}(\X)$ in general. Therefore, we are forced to work with certain extensions of the $K$-theory rings.  In this section, $\widehat{K}$ denotes a formal extension of a ring $K$ by the elements $\aa^{\pm \chi_p(\sl,\cdot)}$.

We can consider the following matrix
$$
\begin{array}{ll}
M^{[\sl],\pm}_{p,r}=  \\   (-1)^{\gamma_p(\sl)} \hbar^{-m_p(\sl)/2} \aa^{\chi_{p}(\sl,\cdot)} H_p^{-1} \left.\Stab^{\fD_{\pm}(\Y_{\sl}),\Y_{\sl},{K} }_{\sigma^{!}}(p^{!})\right|_{p^{!}} \,\tilde{T}^{[\sl],{K}}_{p,r}\, \left.\Stab_{\sigma}^{[\sl\pm \varepsilon],\X,{K}}(r)\right|_{r} 
\end{array}
$$
where $m_p(\sl)$ and $\gamma_p(\sl)$ are as in (\ref{indexpart}). By definition $M^{[\sl],\pm}_{p,r} \in \widehat{K}_{\bT\times\bT^{!}}(pt)$.

Similarly to our consideration in the previous subsection we conclude that the coefficients of this matrix glue to a $K$-theory class $\mf_{\sl}$ in $\hat{K}_{\bT\times \bT^{!}}(\X\times \X^{!}).$
$$
\mf_{\sl}^{\pm}=\sum\limits_{p \in \textsf{FP}}\, ( (-1)^{\gamma_p(\sl)} \hbar^{-m_{p}(\sl)/2}  \aa^{-\chi_{p}(\sl,\cdot)}  \Stab_{\sigma}^{[\sl\pm \varepsilon],\X,{K}}(p))\, \boxtimes\,
\Stab^{\fD_{\pm}(\Y_{\sl}),\Y_{\sl},{K} }_{\sigma^{!}}(p^{!})
$$
and arguing as above we obtain:
\begin{Theorem} 
{\it 

\vspace{2mm}
\indent
\begin{itemize} \label{stheor}
    \item 
    $$
    \mf_{\sl}:=\mf^{+}_{\sl}=\mf^{-}_{\sl}
    $$
    \item The matrix $M^{[\sl],+}_{p,r}=M^{[\sl],-}_{p,r}$ is the fixed point components of $\mf_{\sl}$:
    $$
    (r,p^{!}) \in (\X \times \Y)^{\bT\times\bT^{!}} \ \ \Rightarrow \ \ \left.\mf_{\sl}\right|_{(r,p^{!})}=M^{+}_{p,r}=M^{-}_{p,r}.
   $$
   \item The correspondences 
   $$
\ms_{\mf_\sl}: K_{\bT}(\X) \rightarrow \widehat{K}_{\bT^{!}}(\Y)_{loc}\otimes K_{\bT}, \ \ \     \ms^{t}_{\mf_0}: {K}_{\bT^{!}}(\Y) \rightarrow \widehat{K}(\X)_{loc}\otimes K_{\bT^{!}}
$$
defined by 
$$
\ms_{\mf_{\sl}}: c \mapsto    \pr_{\,\X^{!},*}(\mf_{\sl}\otimes  \pr_{\,\X}^{*}(c)), \ \ \ \ms_{\mf_0}^{t}: c \mapsto    \pr_{\,\X,*}(\mf_{\sl}\otimes  \pr_{\,\X^{!}}^{*}(c)),
$$
map the stable envelope classes to the twisted stable envelope classes
$$
\ms_{\mf_\sl}(\Stab^{[\sl\pm \varepsilon],\X,{K}}_{ \sigma}(p))=(-1)^{\gamma_p(\sl)} \aa^{-\chi_{p}(\sl,\cdot)} \hbar^{-m_{p}(\sl)/2} \, \Stab^{[\mp \varepsilon],\Y_{\sl},{K}}_{-\sigma^{!}}(p^!), 
$$
$$
\ms^{t}_{\mf_\sl}(\Stab^{[\pm \varepsilon],\Y_{\sl},{K}}_{ \sigma^{!}}(p^!))=(-1)^{\gamma_p(\sl)} \aa^{\chi_{p}(\sl,\cdot)} \hbar^{m_{p}(\sl)/2} \, \Stab^{[\sl\mp \varepsilon],\X,{K}}_{-\sigma}(p).
$$
\end{itemize}
}
\end{Theorem}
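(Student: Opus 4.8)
The plan is to mirror the argument already carried out for $\sl=0$ in Proposition \ref{corrms} and the proposition following it, while carefully tracking the extra twisting data: the monomial $\aa^{\chi_p(\sl,\cdot)}$ and the diagonal matrix $\hh$. The whole computation rests on the two factorizations established above. Theorem \ref{factortheorem} gives
\[
\lim_{q\to 0}\tilde{T}^{[\sl],{K}}(\zz q^{\sl},\aa)=\tilde{Z}''\,\tilde{A}^{[\sl'],\X},
\qquad
\tilde{Z}''_{p,r}=Z'_{p,r}\,\aa^{\chi_{p}(\sl,\cdot)-\chi_{r}(\sl,\cdot)},
\]
and Theorem \ref{factheorem} identifies $Z'_{p,r}$, up to conjugation by the diagonal matrix $\hh$, with the fixed-point matrix of $K$-theoretic stable envelopes of $\Y_{\sl}\subset\X^{!}$ with small (anti)ample slope $\fD_{\pm}(\Y_{\sl})$. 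The organizing observation is that the left-hand side $\lim_{q\to 0}\tilde{T}^{[\sl],{K}}$ is an intrinsic object that does not remember the regularizing direction $\pm\varepsilon$; only its two factors do.

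\textbf{First}, I would record the factorization for both $\sl'=\sl+\varepsilon$ with $\varepsilon\in\fD_{+}(\X)$ and $\sl'=\sl-\varepsilon$ with $\varepsilon\in\fD_{-}(\X)$, and note that the reassembled products agree. Plugging the explicit form of $\tilde{Z}''$ from Theorem \ref{factortheorem} together with $Z'=\hh\tilde{Z}\hh^{-1}$ from Theorem \ref{factheorem} into the definition of $M^{[\sl],\pm}_{p,r}$, one sees that the prefactors $(-1)^{\gamma_{p}(\sl)}\hbar^{-m_{p}(\sl)/2}\aa^{\chi_{p}(\sl,\cdot)}$ are precisely arranged to cancel the $\hh$- and $\aa$-conjugations coming from Theorem \ref{factheorem}. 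Since the uncancelled remainder is the sign-independent limit $\lim_{q\to 0}\tilde{T}^{[\sl],{K}}$, this yields $M^{[\sl],+}_{p,r}=M^{[\sl],-}_{p,r}$ and hence $\mf^{+}_{\sl}=\mf^{-}_{\sl}$.

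\textbf{Next}, I would verify that the $M^{[\sl],\pm}_{p,r}$ are the fixed-point restrictions of the proposed class $\mf_{\sl}$. This is the twisted analogue of the second bullet of Proposition \ref{corrms}: restricting the Künneth-type sum $\mf^{\pm}_{\sl}$ to a fixed point $(r,p^{!})\in(\X\times\Y_{\sl})^{\bT\times\bT^{!}}$ and applying equivariant localization reproduces exactly the product of stable-envelope restrictions in the definition of $M^{[\sl],\pm}_{p,r}$. Because a class on a variety with finitely many fixed points is determined by its fixed-point restrictions, this shows that the matrices glue to the global class $\mf_{\sl}$, giving the first two bullets. \textbf{Finally}, for the correspondence property I would compute $\ms_{\mf_{\sl}}\big(\Stab^{[\sl\pm\varepsilon],\X,{K}}_{\sigma}(p)\big)$ by localization, just as in the untwisted proposition, and reduce to the orthogonality of stable envelopes: the Euler-characteristic pairing $\chi_{\X}$ between the slope-$(\sl\pm\varepsilon)$ envelope and the opposite-chamber, opposite-slope envelope collapses to $\delta_{p,r}$ by Lemma \ref{orthlem} (equivalently Section 1.9.16 of \cite{pcmilect}), leaving behind exactly the scalar $(-1)^{\gamma_{p}(\sl)}\aa^{-\chi_{p}(\sl,\cdot)}\hbar^{-m_{p}(\sl)/2}$ in front of $\Stab^{[\mp\varepsilon],\Y_{\sl},{K}}_{-\sigma^{!}}(p^{!})$; the transpose statement is symmetric.

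The hard part is not the bookkeeping but the fact that the weights $\chi_{p}(\sl,\cdot)$ are in general non-integral, so $\aa^{\chi_{p}(\sl,\cdot)}\notin K_{\bT}(\X)$ and every identity must be read in the formal extension $\widehat{K}$. I would therefore check at each stage that the localization formula and the orthogonality pairing remain valid after adjoining these fractional monomials, and in particular that conjugation by $\aa^{\chi(\sl,\cdot)}$ and by $\hh$ does not disturb the triangularity used to invert $\tilde{A}^{[\sl'],\X}$. A secondary subtlety is the passage from $\X^{!}$ to the proper fixed locus $\Y_{\sl}$: the reinterpretation of the $\aa\to 0$ limit in terms of stable envelopes of $\Y_{\sl}$ is exactly what Theorem 2 of \cite{KononovSmirnov1} supplies, and I would invoke it to legitimize replacing $\X^{!}$-data by $\Y_{\sl}$-data in Step one.
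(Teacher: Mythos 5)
Your proposal is correct and follows essentially the same route as the paper: the paper proves this theorem by the phrase ``arguing as above,'' i.e.\ by repeating the $\sl=0$ argument of Proposition \ref{corrms} and the proposition after it, which rests exactly on the ingredients you identify --- the two factorizations of Theorems \ref{factortheorem} and \ref{factheorem}, the sign-independence of $\lim_{q\to 0}\tilde{T}$, localization at the finitely many fixed points, and the orthogonality of stable envelopes from Lemma \ref{orthlem} / Section 1.9.16 of \cite{pcmilect}. Your explicit attention to the non-integral weights $\aa^{\chi_p(\sl,\cdot)}$ and the passage to $\widehat{K}$ is a welcome elaboration of a point the paper only mentions in passing.
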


\section{Wall-crossing operators \label{wcsec}}

\subsection{}

Let us consider a slope $\sl \in H^{2}(\X,\matR)$. We can always choose small ample slope $\varepsilon \in \fD_{+}(\X)$ so that 

\begin{itemize}
\item $\sl\pm \varepsilon \not\in \textsf{Wall}(\X)$, i.e., both $\sl +\varepsilon$ and
$\sl-\varepsilon$ are regular.

\item If $\sl$ in not regular, then it is the only non-regular slope on the $\matR$-segment connecting points $\sl +\varepsilon$ and $\sl-\varepsilon$ in $H^{2}(\X,\matR)$:
$$
(\sl-\varepsilon,\sl+\varepsilon) \cap \textsf{Wall}(\X) = \left\{\begin{array}{l}\sl\\ \emptyset
\end{array}\right.
$$
\end{itemize}
In this situation, we define the following operator
\be \label{Rw}
\mathsf{R}^{\X}(\sl,\sigma)=(\Stab^{[\sl-\varepsilon],\X,{K}}_{\sigma})^{-1}  \circ \Stab^{[\sl+\varepsilon],\X,{K}}_{\sigma}  \in \mathrm{End}(K_{\bT}(\X^{\bA})_{loc}).
\ee
This operator describes the change of $K$-theoretic stable envelope corresponding to infinitesimal change of slope parameters from $\sl-\varepsilon$ to $\sl+\varepsilon$. Clearly, if $\sl$ is regular then $\mathsf{R}^{\X}(\sl,\sigma)=\textrm{Id}$.

\begin{Definition}
{\it 
If $\sl$ belongs to exactly one hyperplane of $\mathsf{Wall}(\X)$ then $\mathsf{R}^{\X}(\sl,\sigma)$ is called 
wall $R$-matrix. }
\end{Definition}

\begin{Remark}
Another distinguished operator is $\mathsf{R}^{\X}(0,\sigma)$. It describes the change of the $K$-theoretic stable envelopes from small anti-ample to small ample slopes.  
\end{Remark}

\begin{Remark} \label{remarkfactor}
Clearly, in general $\mathsf{R}^{\X}(\sl,\sigma)$,  is a product of several wall $R$-matrices. For instance
$$
\mathsf{R}^{\X}(0,\sigma)= \prod\, \mathsf{R}^{\X}(\sl,\sigma)
$$
where the product is over the wall $R$-matrices corresponding to the walls passing through $0\in H^{2}(\X,\matR)$.
\end{Remark}

\begin{Remark}
The wall $R$-matrices are important objects of geometric representation theory.  They  were investigated for $\X$ given by Springer resolutions in \cite{CSWallCrossing}. It was shown that the action of wall $R$-matrices generate the action of the affine Hecke algebra on $K_{\bT}(\X)$ is this case.

Interesting conjectures about these operators for
$\X=\mathrm{Hilb}^{n}(\matC^2)$ -- the Hilbert scheme of points in $\matC^2$ -- are discussed in \cite{NegGor} (see Section (\ref{hilbex}) below for this example).

In general, the wall $R$-matrices $\mathsf{R}^{\X}(\sl,\sigma)$ are solutions of the quantum Yang-Baxter equations, and thus can be used to construct actions of quantum groups on $K_{\bT}(\X^{\bA})$  \cite{OS}.
\end{Remark}

\begin{Proposition}[\cite{OS}]
{\it Let $\mathsf{R}^{\X}(\sl,\sigma)_{p,r}$ be the matrix elements of a wall $R$-matrix $\mathsf{R}^{\X}(\sl,\sigma)$ in the basis of the torus fixed points, then
$$
\mathsf{R}^{\X}(\sl,\sigma)_{p,r} \neq 0 \ \ \Longrightarrow \ \ \chi_p-\chi_r = [C] \otimes v \in H_2(X,\matZ)_{\textrm{eff}} \otimes \bA^{\wedge}_{>} 
$$}
\end{Proposition}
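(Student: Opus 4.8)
The plan is to read the wall $R$-matrix off the two factorizations of the degenerate restriction matrix supplied by Theorem~\ref{factortheorem}, and then to control the supports of the resulting factors via Corollary~\ref{cor2}. Fix $\sl$ lying on exactly one hyperplane of $\mathsf{Wall}(\X)$, and a small ample $\varepsilon\in\fD_{+}(\X)$ with $\sl\pm\varepsilon$ regular, as in~(\ref{Rw}). Applying Theorem~\ref{factortheorem} once with the regular slope $\sl+\varepsilon$ and once with $\sl-\varepsilon$ --- both lie in a small neighbourhood of $\sl$ --- produces two factorizations of one and the same matrix,
$$
\lim_{q\to0}\tilde{T}(\zz q^{\sl},\aa)=\tilde{Z}''_{+}\,\tilde{A}^{[\sl+\varepsilon],\X}=\tilde{Z}''_{-}\,\tilde{A}^{[\sl-\varepsilon],\X},
$$
so that eliminating the common left-hand side gives $\tilde{A}^{[\sl+\varepsilon],\X}\big(\tilde{A}^{[\sl-\varepsilon],\X}\big)^{-1}=\big(\tilde{Z}''_{+}\big)^{-1}\tilde{Z}''_{-}$. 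First I would record that, in the fixed-point basis, the operator $\mathsf{R}^{\X}(\sl,\sigma)$ of~(\ref{Rw}) is exactly this transition matrix between the two stable bases, up to conjugation by the diagonal matrices built from the self-restrictions $\left.\Stab^{[\sl\pm\varepsilon],\X,{K}}_{\sigma}(r)\right|_{r}$ and the transpose dictated by viewing $\Stab$ as a correspondence $K(\X^{\bA})\to K(\X)$. As these normalizations are diagonal, they do not affect which off-diagonal entries vanish.

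Next I would control the two factors. By Theorem~\ref{factortheorem} each $\tilde{Z}''_{\pm}$ has unit diagonal, with entries $\tilde{Z}''_{p,r}=Z'_{p,r}\,\aa^{\chi_{p}(\sl,\cdot)-\chi_{r}(\sl,\cdot)}$; the monomial prefactor does not move the zero locus, so $\tilde{Z}''_{\pm}$ shares the support of the corresponding $Z'_{\pm}$. Because $\sl$ meets exactly one wall, Corollary~\ref{cor2} applies to both choices of $\sl'$ and yields
$$
\tilde{Z}''_{p,r}\neq0\ \Longrightarrow\ \chi_{p}-\chi_{r}=[C]\otimes v\in H_{2}(\X,\matZ)_{\mathrm{eff}}\otimes\bA^{\wedge}_{>},
$$
with $[C]$ a non-negative multiple of the primitive effective class $[C_{0}]$ cutting out the wall (equivalently $r\in\mathrm{Attr}^{f}_{\sigma}(p)$), so each $\tilde{Z}''_{\pm}$ is unipotent for the order~(\ref{order}).

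It remains to propagate this support through the product and the inverse. The point I would isolate as a lemma is that the unipotent matrices whose nonzero entries satisfy $\chi_{p}-\chi_{r}\in\matZ_{\geq 0}[C_{0}]\otimes\bA^{\wedge}_{>}$ form a set closed under multiplication and inversion: a product of entries contributes $(\chi_{p}-\chi_{q})+(\chi_{q}-\chi_{r})$, which is again of the shape $[C]\otimes v$ precisely because every admissible curve class is collinear with $[C_{0}]$, while the characters remain in the cone $\bA^{\wedge}_{>}$, and inversion of a unipotent matrix only assembles such products. Applying this to $\big(\tilde{Z}''_{+}\big)^{-1}\tilde{Z}''_{-}$ and transporting the result through the harmless diagonal conjugation and transpose of the first paragraph gives $\mathsf{R}^{\X}(\sl,\sigma)_{p,r}\neq0\Rightarrow\chi_{p}-\chi_{r}=[C]\otimes v\in H_{2}(\X,\matZ)_{\mathrm{eff}}\otimes\bA^{\wedge}_{>}$, as claimed.

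The hard part is the bookkeeping in the first paragraph: one must pin down the precise transpose and diagonal normalization relating $\mathsf{R}^{\X}(\sl,\sigma)$ to $\big(\tilde{Z}''_{+}\big)^{-1}\tilde{Z}''_{-}$ so that the cone $\bA^{\wedge}_{>}$, and not its opposite, appears with the correct sign. The collinearity of curve classes used in the closure step is exactly where the single-wall hypothesis enters, and it is what upgrades the general tensor-cone bound of Proposition~\ref{dprop} to the rank-one form $[C]\otimes v$.
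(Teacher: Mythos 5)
Your argument is correct, but it is not the route the paper takes. The paper's proof is a one-line instruction to rerun the computation of Theorem \ref{factortheorem} and Corollary \ref{cor2} with $\Gamma(p)$ replaced by $\Stab^{[\sl+\varepsilon],\X,{K}}_{\sigma}(p)$: by Lemma \ref{orthlem} the matrix element $\mathsf{R}^{\X}(\sl,\sigma)_{p,r}$ equals the Euler characteristic $\chi_{\X}\bigl(\Stab^{[-\sl+\varepsilon],\X,{K}}_{-\sigma}(r)\otimes\Stab^{[\sl+\varepsilon],\X,{K}}_{\sigma}(p)\bigr)$, and the window/Newton-polygon estimates (\ref{np1})--(\ref{np2}) force any nonzero entry to be an $\aa$-monomial of weight $\chi_{p}(\sl,\cdot)-\chi_{r}(\sl,\cdot)$, after which the single-wall argument of Corollary \ref{cor2} gives the rank-one form $[C]\otimes v$. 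You instead take the two already-established factorizations $\lim_{q\to0}\tilde{T}(\zz q^{\sl},\aa)=\tilde{Z}''_{\pm}\,\tilde{A}^{[\sl\pm\varepsilon],\X}$, identify $\mathsf{R}^{\X}(\sl,\sigma)$ with $(\tilde{Z}''_{+})^{-1}\tilde{Z}''_{-}=\tilde{A}^{[\sl+\varepsilon],\X}\bigl(\tilde{A}^{[\sl-\varepsilon],\X}\bigr)^{-1}$, and close the support condition under products and inverses of unipotent matrices, with the single-wall hypothesis entering through the collinearity of all admissible curve classes. This is a genuinely different and softer derivation: it reuses Corollary \ref{cor2} instead of re-running the degree estimate, and it makes visible why the rank-one refinement cannot be read off the $\tilde{A}$-side alone (products of attracting-support matrices only give sums of pure tensors), whereas the paper's direct computation additionally exhibits the entries as explicit Euler characteristics.

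The loose end you flag closes without incident. The self-restrictions $\left.\Stab^{[\sl\pm\varepsilon],\X,{K}}_{\sigma}(r)\right|_{r}$ are fixed by the normalization axiom independently of the slope, so the two diagonal matrices coincide and cancel; and the indexing intended in the Proposition is the same restriction-matrix convention used for $\tilde{T}_{p,r}$ and $\tilde{A}_{p,r}$, under which $\mathsf{R}^{\X}(\sl,\sigma)_{p,r}$ is literally the $(p,r)$ entry of $\tilde{A}^{[\sl+\varepsilon],\X}\bigl(\tilde{A}^{[\sl-\varepsilon],\X}\bigr)^{-1}$. Since every matrix in sight is unipotent and lower-triangular for the same order (\ref{order}), the cone $\bA^{\wedge}_{>}$ appears with the stated sign and nothing can flip in the process.
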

\begin{proof}
The argument repeat the proof of Corollary \ref{cor2} and Theorem \ref{factortheorem} with $\Gamma(p)$ replaced by $\Stab^{[\sl+\varepsilon],\X,{K}}_{\sigma}(p)$.
\end{proof}

\subsection{} 
The wall $R$-matrices  of dual varieties $\X$ and $\X^{!}$ are related.

\begin{Theorem}  \label{wallcrth}
{\it Let $\sl \in H^{2}(\X,\matR)$ and let $\Y_{\sl} \subset \X^{!}$ be the corresponding subvariety, then
$$
\mathsf{R}^{\X}(\sl,\sigma)= \aa^{\chi(\sl,\cdot)} \hh\, \mathsf{R}^{\Y_{\sl}}(0,-\sigma^{!})^{-1}\, \hh^{-1} \aa^{-\chi(\sl,\cdot)}. \ \ \ 
$$}
\end{Theorem}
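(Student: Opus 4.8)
The strategy is to express both sides of the desired identity in terms of the factorization established in Theorem~\ref{factheorem} and then match the two descriptions of the same limiting object. First I would recall that the wall $R$-matrix $\mathsf{R}^{\X}(\sl,\sigma)$ is by definition the transition matrix $(\Stab^{[\sl-\varepsilon],\X,{K}}_{\sigma})^{-1}\circ \Stab^{[\sl+\varepsilon],\X,{K}}_{\sigma}$ between the $K$-theoretic stable bases of $\X$ on the two sides of the wall $\sl$. In the fixed-point basis, using the normalized matrices $\tilde A^{[\sl\pm\varepsilon],\X}$ from~(\ref{amat}), this reads $\mathsf{R}^{\X}(\sl,\sigma)=(\tilde A^{[\sl-\varepsilon],\X})^{-1}\tilde A^{[\sl+\varepsilon],\X}$ up to the diagonal normalization factors $\left.\Stab^{[\sl\pm\varepsilon],\X,{K}}_{\sigma}(r)\right|_{r}$, which I would track carefully since they contribute to the conjugating data.

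Next I would apply Theorem~\ref{factortheorem} twice, once for the slope approached from $\fD_{+}$ and once from $\fD_{-}$. The key observation is that the elliptic limit $\lim_{q\to 0}\tilde T(\zz q^{\sl},\aa)$ is a single well-defined object independent of the auxiliary regular slope $\sl'$ used to factorize it; only the way it splits as $\tilde Z''\tilde A^{[\sl'],\X}$ depends on whether $\sl'=\sl+\varepsilon$ or $\sl'=\sl-\varepsilon$. Writing both factorizations and equating them gives
$$
\tilde Z''_{+}\,\tilde A^{[\sl+\varepsilon],\X}=\tilde Z''_{-}\,\tilde A^{[\sl-\varepsilon],\X},
$$
so that $(\tilde A^{[\sl-\varepsilon],\X})^{-1}\tilde A^{[\sl+\varepsilon],\X}=(\tilde Z''_{-})^{-1}\tilde Z''_{+}$. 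This already identifies $\mathsf{R}^{\X}(\sl,\sigma)$ (in the fixed-point basis, modulo normalization) with a ratio of the two $\tilde Z''$ operators. By Theorem~\ref{factheorem}, each $\tilde Z''_{\pm}=\aa^{\chi(\sl,\cdot)}Z'_{\pm}$ with $Z'_{\pm}=\hh\,\tilde Z_{\pm}\,\hh^{-1}$, where $\tilde Z_{\pm}$ is the matrix of $K$-theoretic stable envelopes of $\Y_{\sl}$ with small ample (respectively anti-ample) slope. Substituting and using that the monomial prefactor $\aa^{\chi(\sl,\cdot)}$ and the diagonal $\hh$ are common to both signs, I would obtain $(\tilde Z''_{-})^{-1}\tilde Z''_{+}=\aa^{\chi(\sl,\cdot)}\hh\,(\tilde Z_{-})^{-1}\tilde Z_{+}\,\hh^{-1}\aa^{-\chi(\sl,\cdot)}$.

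Finally I would identify $(\tilde Z_{-})^{-1}\tilde Z_{+}$ with the wall $R$-matrix of $\Y_{\sl}$ at the origin. By construction $\tilde Z_{+}$ and $\tilde Z_{-}$ are the fixed-point matrices of the stable envelopes of $\Y_{\sl}$ for small ample slopes in $\fD_{+}(\Y_{\sl})$ and small anti-ample slopes in $\fD_{-}(\Y_{\sl})$ respectively; their ratio is precisely the transition matrix between the stable bases of $K(\Y_{\sl})$ across the walls passing through $0$, i.e.\ $\mathsf{R}^{\Y_{\sl}}(0,\sigma^{!})$ in the notation of~(\ref{Rw}) and Remark~\ref{remarkfactor}. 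The appearance of $-\sigma^{!}$ and of the inverse on the right-hand side comes from the cocharacter reversal in Theorem~\ref{factheorem} (where the dual stable envelopes are taken for $\sigma^{!}$ while the limits invert the order, as noted after~(\ref{tmir})) and from the fact that $\mathsf{R}^{\X}(\sl,\sigma)$ is read as (anti-ample)$^{-1}\cdot$(ample) whereas on the dual side the roles are exchanged. Assembling these gives the claimed formula.

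\textbf{Main obstacle.} The delicate point will be bookkeeping of the three sources of sign and order reversal: the $\kappa$-exchange of equivariant and Kähler parameters in~(\ref{tmir}), the order-inversion of fixed points between $\X$ and $\X^{!}$ from the bijection~(\ref{biject}), and the cocharacter flip $\sigma\leftrightarrow-\sigma^{!}$ together with the transpose built into the factorization~(\ref{factor}). Getting the conjugation by $\aa^{\chi(\sl,\cdot)}\hh$ on the correct side, and verifying that the index-theoretic diagonal $\hh$ (which depends on $\sl$ through $\gamma_p(\sl)$ and $m_p(\sl)$) is genuinely common to both $\tilde Z''_{\pm}$ so that it does not cancel but conjugates, will require care; the rest is a formal manipulation of the already-established factorizations.
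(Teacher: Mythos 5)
Your plan takes a genuinely different route from the paper. The paper's proof is a two-line application of Theorem \ref{stheor}: one writes $\Stab^{[\sl+\varepsilon],\X,K}_{\sigma}=\Stab^{[\sl-\varepsilon],\X,K}_{\sigma}\circ\mathsf{R}^{\X}(\sl,\sigma)$ and applies the correspondence $\ms_{\mf_\sl}$ to both sides; since Theorem \ref{stheor} already records that $\ms_{\mf_\sl}$ sends the slope $\sl\pm\varepsilon$ to $\mp\varepsilon$ (note the flip) and the cocharacter $\sigma$ to $-\sigma^{!}$, with the explicit diagonal prefactor $(-1)^{\gamma_p}\aa^{-\chi_p(\sl,\cdot)}\hbar^{-m_p(\sl)/2}$, the inverse, the $-\sigma^{!}$, and the conjugation by $\aa^{\chi(\sl,\cdot)}\hh$ all drop out at once. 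You instead propose to inline the content of Theorem \ref{stheor} by equating the two factorizations of $\lim_{q\to0}\tilde T(\zz q^{\sl},\aa)$ coming from $\sl'=\sl\pm\varepsilon$. This is viable in principle (Theorem \ref{stheor} is itself proved this way), but it forces you to redo exactly the bookkeeping that the paper has already packaged, and your sketch gets it wrong at the decisive points.

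Concretely: from $\tilde Z''_{+}\tilde A^{[\sl+\varepsilon],\X}=\tilde Z''_{-}\tilde A^{[\sl-\varepsilon],\X}$ you obtain $(\tilde Z''_{-})^{-1}\tilde Z''_{+}=\tilde A^{[\sl-\varepsilon],\X}\bigl(\tilde A^{[\sl+\varepsilon],\X}\bigr)^{-1}$, not $(\tilde A^{[\sl-\varepsilon],\X})^{-1}\tilde A^{[\sl+\varepsilon],\X}$ as you wrote; and because the matrices $\tilde A^{[\sl'],\X}_{p,r}=\Stab^{[\sl']}(p)|_r/\Stab^{[\sl']}(r)|_r$ are indexed with the envelope label first (i.e.\ they are transposes of the operator matrices, with a row-dependent normalization), the combination $\tilde A^{[\sl-\varepsilon]}(\tilde A^{[\sl+\varepsilon]})^{-1}$ is the \emph{transpose-inverse} of $\mathsf{R}^{\X}(\sl,\sigma)$ twisted by the diagonal normalizations $\Stab^{[\sl\pm\varepsilon]}(r)|_r$ -- not the matrix $\mathsf{R}^{\X}(\sl,\sigma)$ itself. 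The same transposed convention appears in $\tilde Z_{r,p}$ of Theorem \ref{factheorem}. As a result, your identification of $(\tilde Z_{-})^{-1}\tilde Z_{+}$ with ``$\mathsf{R}^{\Y_\sl}(0,\sigma^{!})$'' followed by the assertion that the inverse and the $-\sigma^{!}$ ``come from the cocharacter reversal'' is precisely the statement to be proved, and it is left unestablished: nothing in your argument actually produces the slope flip $\sl\pm\varepsilon\mapsto\mp\varepsilon$ that generates the inverse on the right-hand side. To repair the argument you would either have to carry out the transpose/normalization accounting explicitly (at which point you have essentially re-proved Theorem \ref{stheor}), or simply invoke Theorem \ref{stheor} directly as the paper does.
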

\begin{proof}
The result follows immediately from the last item of Theorem \ref{stheor}. By the definition of 
$\mathsf{R}^{\X}(\sl,\sigma)$ we have
$$
\Stab^{[\sl+\varepsilon],\X,{K}}_{\sigma}= \Stab^{[\sl-\varepsilon],\X,{K}}_{\sigma}\, \mathsf{R}^{\X}(\sl,\sigma)
$$
applying $\ms_{\mf_\sl}$ as in Theorem \ref{stheor} gives
$$
 \Stab^{[-\varepsilon],\Y_{\sl},{K}}_{-\sigma^{!}} \aa^{-\chi(\sl,\cdot)} \hh^{-1} =  \Stab^{[+\varepsilon],\Y_{\sl},{K}}_{-\sigma^{!}}\, \aa^{-\chi(\sl,\cdot)} \hh^{-1}\,\mathsf{R}^{\X}(\sl,\sigma)\,
$$
which means
$$
\Stab^{[+\varepsilon],\Y_{\sl},{K}}_{-\sigma^{!}}=\Stab^{[-\varepsilon],\Y_{\sl},{K}}_{-\sigma^{!}}\,\aa^{-\chi(\sl,\cdot)} \hh^{-1}\,\mathsf{R}^{\X}(\sl,\sigma)^{-1} \aa^{\chi(\sl,\cdot)} \hh
$$
thus
$$
\aa^{-\chi(\sl,\cdot)} \hh^{-1}\,\mathsf{R}^{\X}(\sl,\sigma)^{-1} \aa^{\chi(\sl,\cdot)} \hh=\mathsf{R}^{\Y_{\sl}}(0,-\sigma^{!})
$$
the inverse of this identity gives the result. 
\end{proof}

The following elementary corollary gives new vanishing conditions for the matrix elements of $\mathsf{R}^{\X}(\sl,\sigma)$:
\begin{Corollary}
{\it If $p^{!}$ and $r^{!}$ belong to different components of $\Y_{\sl}$ then
$$
\mathsf{R}^{\X}(\sl,\sigma)_{p,r}=0.
$$}
\end{Corollary}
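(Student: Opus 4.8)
The plan is to read the vanishing off directly from the factorization of Theorem~\ref{wallcrth}, which writes $\mathsf{R}^{\X}(\sl,\sigma)$ as the conjugate of $\mathsf{R}^{\Y_{\sl}}(0,-\sigma^{!})^{-1}$ by the operator $\aa^{\chi(\sl,\cdot)}\hh$. The first point I would record is that both conjugating factors are \emph{diagonal} in the fixed point basis indexed by $\mathsf{FP}$: the operator $\hh$ is diagonal by its definition~(\ref{indexpart}), and $\aa^{\chi(\sl,\cdot)}$ acts on the fixed point $p$ by multiplication by the single monomial $\aa^{\chi_{p}(\sl,\cdot)}$. Hence conjugation by $\aa^{\chi(\sl,\cdot)}\hh$ merely rescales each matrix element $\mathsf{R}^{\X}(\sl,\sigma)_{p,r}$ by a nonzero ratio of diagonal entries and neither creates nor destroys zero entries. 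It therefore suffices to prove that $\mathsf{R}^{\Y_{\sl}}(0,-\sigma^{!})^{-1}_{p^{!},r^{!}}=0$ whenever $p^{!}$ and $r^{!}$ lie in different connected components of $\Y_{\sl}$.

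The key step is the block-diagonality of the $R$-matrix of $\Y_{\sl}$ with respect to its decomposition into connected components. Writing $\Y_{\sl}=\coprod_{c}\Y_{\sl}^{(c)}$ gives a direct sum $K_{\bT^{!}}(\Y_{\sl})=\bigoplus_{c}K_{\bT^{!}}(\Y_{\sl}^{(c)})$. For a fixed point $p^{!}$ lying in the component $\Y_{\sl}^{(c)}$, the stable envelope $\Stab^{[\pm\varepsilon],\Y_{\sl},{K}}_{-\sigma^{!}}(p^{!})$ is supported on the full attracting set $\mathrm{Attr}^{f}_{-\sigma^{!}}(p^{!})$, so its restriction to any fixed point $r^{!}$ in a different component vanishes once one knows that this attracting set is contained in $\Y_{\sl}^{(c)}$. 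Granting this, the matrices of fixed point restrictions of $\Stab^{[\pm\varepsilon],\Y_{\sl},{K}}_{-\sigma^{!}}$ are block-diagonal in the component grading, and so are their inverses.

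By the analogue of definition~(\ref{Rw}) for $\Y_{\sl}$, the matrix $\mathsf{R}^{\Y_{\sl}}(0,-\sigma^{!})=(\Stab^{[-\varepsilon],\Y_{\sl},{K}}_{-\sigma^{!}})^{-1}\circ \Stab^{[+\varepsilon],\Y_{\sl},{K}}_{-\sigma^{!}}$ is a composition of two block-diagonal operators, hence block-diagonal, and therefore so is its inverse. Combined with the first paragraph this yields $\mathsf{R}^{\X}(\sl,\sigma)_{p,r}=0$ for $p^{!},r^{!}$ in distinct components. The only point needing care --- the main, if minor, obstacle --- is the support claim that $\mathrm{Attr}^{f}_{-\sigma^{!}}(p^{!})$ stays inside a single component. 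This I would settle by noting that the cocharacter $\sigma^{!}$ commutes with the cyclic group $\mu_{\sl}$ cutting out $\Y_{\sl}=(\X^{!})^{\mu_{\sl}}$, so its flow preserves $\Y_{\sl}$; the attracting set of $p^{!}$ is then a connected subset of $\Y_{\sl}$ containing $p^{!}$, and the full attracting set, built as a finite connected chain of such sets, remains connected, hence cannot leave the connected component $\Y_{\sl}^{(c)}$.
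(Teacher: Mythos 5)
Your argument is correct and is exactly the reasoning the paper intends: the Corollary is stated without proof as an ``elementary'' consequence of Theorem~\ref{wallcrth}, and the implicit argument is precisely yours --- the conjugating factors $\aa^{\chi(\sl,\cdot)}\hh$ are diagonal in the fixed-point basis, and $\mathsf{R}^{\Y_{\sl}}(0,-\sigma^{!})$ is block-diagonal with respect to the connected components of $\Y_{\sl}$ because stable envelopes are supported on attracting sets that cannot leave a component. Your write-up merely makes explicit the support argument the authors leave unstated; nothing is missing.
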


\begin{Remark}
We note that by Theorem \ref{wallcrth},  a wall $R$-matrix $\mathsf{R}^{\X}(\sl,\sigma)$ factors into a product of wall $R$-matrices $\mathsf{R}^{\Y_{\sl}}$ associated with $\Y_\sl\subset \X^{!}$, see Remark~$\ref{remarkfactor}$. This factorization, in turn, can be applied to each of the new factors $\mathsf{R}^{\Y_{\sl}}$ and so on.  This recursion leads to a factorization of wall a $R$-matrix into certain ``elementary matrices'', which can not be further factorized. 
\end{Remark}

\section{Application: Hilbert scheme $\X=\mathrm{Hilb}^{n}(\matC^2)$ \label{hilbex}}

\subsection{} 
For a natural number $n$ let  $\X=\textrm{Hilb}^{n}(\matC^2)$ denote the Hilbert scheme of $n$ points in $\matC^2$. This space satisfies all the conditions discussed in Section \ref{datasec}. The Hilbert scheme is known to be {\it self-dual} in the sense that there is an isomorphism
$
\X^{!} \cong \X
$ \cite{AOprep}.


The Hilbert scheme $\X$ is isomorphic to a Nakajima variety associated to the Jordan quiver with dimension vector $(n)$ and framing vector $(1)$ see Fig.\ref{jord}.   We refer to \cite{NakajimaLectures1} for a beautiful introduction into geometry of $\X$.
The elliptic stable envelope classes for $\X$ were computed in \cite{SmirnovElliptic}.

\begin{figure}[H]
	\centering
	\includegraphics[width=3cm]{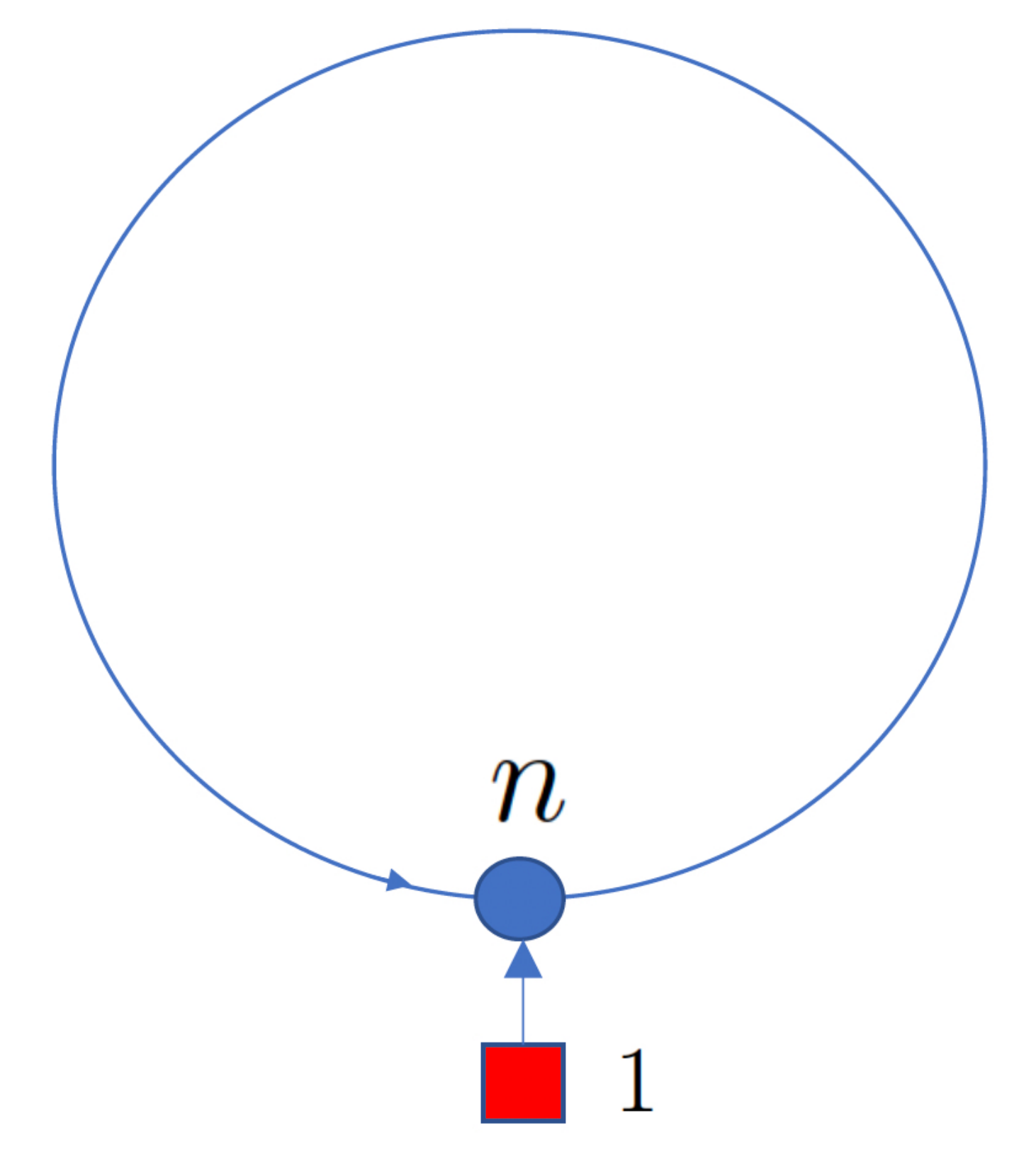}
	\caption{\label{jord} The quiver defining the Hilbert scheme $\X=\textrm{Hilb}^{n}(\matC^2)$.}
\end{figure}
\noindent

\subsection{}

In this case $\Pic(\X)\cong \matZ$ is generated by $\mathscr{O}(1)$. We identify $H^{2}(\X,\matR)=\matR$ so that the integer points correspond to $c_1(\mathscr{O}(m))$, $m\in \matZ$.

\begin{Proposition}[\cite{KononovSmirnov1}]
{\it
\vspace{1mm}

\indent
\begin{itemize} 
    \item Under the identification $H^{2}(\X,\matR)=\matR$ the walls of $\X$ are located at the following rational points
$$
\mathsf{Walls}(\X) = \left\{ \dfrac{a}{b} \in \matQ: |b|\leq n  \right\}  \cong \matR
$$ 
\item For a slope $\sl = \frac{a}{b}$ the subvariety $\Y_{\sl}\subset \X^{!}$ has the following form:
$$
\Y_{\sl}= \coprod_{{n_0,n_1,\dots,n_{b-1}} \atop {n_0+\dots + n_{b-1}=n}} \X(n_0,\dots, n_{b-1})   
$$
Its connected components $\X(n_0,\dots, n_{b-1})$ are isomorphic to the Nakajima varieties associated to the cyclic quiver with $b$ vertices, the dimension vector $(n_0,\dots,n_{b-1})$ and the framing dimension vector $(1,0,\dots, 0)$, see Fig. \ref{cycl}. 
\end{itemize}}
\end{Proposition}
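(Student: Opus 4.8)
The plan is to establish the two bullets independently: the location of the walls follows from the resonance computation on the mirror side, and the structure of $\Y_{\sl}$ follows from an ADHM analysis of the cyclic fixed locus.

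\medskip

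\noindent \textbf{Walls.} Since $\X^{!}\cong \X$ by self-duality \cite{AOprep}, Theorem \ref{thm2} gives $\mathsf{Walls}(\X)=\mathsf{Res}(\X^{!})\cong \mathsf{Res}(\X)$, so it suffices to compute the resonance arrangement of $\mathrm{Hilb}^{n}(\matC^2)$ and transport it through $\kappa$. Because $H^{2}(\X,\matR)\cong\matR$ and $\Lie_{\matR}(\bA)\cong \matR$ are one dimensional, the resonance condition $\langle \alpha,\wall\rangle+m=0$ reduces, for each nonzero weight $\alpha\in\mathrm{char}_{\bA}(T_p\X)$, to $\wall\in \tfrac{1}{\alpha}\matZ$. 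The first step is to recall the tangent weights at a fixed point $\lambda\vdash n$: over boxes $\square\in\lambda$ they are $t_1^{-l(\square)}t_2^{a(\square)+1}$ and $t_1^{l(\square)+1}t_2^{-a(\square)}$, so on the symplectic subtorus $\bA=\{t_1t_2=1\}$ they become $\pm(a(\square)+l(\square)+1)=\pm h(\square)$, the hook lengths. As the row partition $\lambda=(n)$ already realizes every hook length $1,2,\dots,n$, and no hook length of a partition of $n$ exceeds $n$, the $\bA$-weights that occur are exactly $\pm\{1,\dots,n\}$. Hence $\mathsf{Res}(\X)=\bigcup_{b=1}^{n}\tfrac1b\matZ=\{a/b:\,|b|\le n\}$, which is the claimed set of walls.

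\medskip

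\noindent \textbf{The subvariety $\Y_{\sl}$.} Through $\kappa$ a slope $\sl=a/b$ in lowest terms corresponds to a generator of a cyclic subgroup $\mu_{\sl}\cong\matZ/b\subset \bA^{!}$, which acts on $\X^{!}\cong\mathrm{Hilb}^{n}(\matC^2)$ through a primitive $b$-th root of unity $\zeta$ scaling the coordinates of $\matC^2$ with opposite weights $(1,-1)$ (the numerator $a$, being coprime to $b$, only relabels the components below). The core step is to identify the fixed locus $\Y_{\sl}=(\mathrm{Hilb}^{n})^{\mu_{\sl}}$ using the ADHM description \cite{NakajimaLectures1}: a point is a cyclic triple $(B_1,B_2,i)$ with $B_1,B_2\in\mathrm{End}(V)$, $i\in\mathrm{Hom}(W,V)$, $\dim V=n$, $\dim W=1$, satisfying $[B_1,B_2]=0$, modulo $GL(V)$. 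Being $\mu_{\sl}$-fixed up to gauge endows $V$ with a $\matZ/b$-representation structure, giving an isotypic decomposition $V=\bigoplus_{k\in\matZ/b}V_k$; equivariance then forces $B_1\colon V_k\to V_{k+1}$, $B_2\colon V_k\to V_{k-1}$, and $i\colon W\to V_0$ with $W$ trivial. This is precisely the data of the cyclic quiver with $b$ vertices, dimension vector $(n_0,\dots,n_{b-1})$ with $n_k=\dim V_k$, and framing $(1,0,\dots,0)$, and the moment-map equation and stability descend to the quiver ones. Since $(n_0,\dots,n_{b-1})$ is locally constant, $\Y_{\sl}$ is the disjoint union of the varieties $\X(n_0,\dots,n_{b-1})$ over all compositions $n_0+\dots+n_{b-1}=n$.

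\medskip

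\noindent The step I expect to be most delicate is fixing the normalization: verifying via $\kappa$ that the slope $a/b$ yields exactly the group $\matZ/b$ acting with coordinate weights $(1,-1)$, and then matching the GIT stability and moment map inherited by the fixed locus with the standard ones defining the cyclic quiver variety, so that the components of $\Y_{\sl}$ are genuinely isomorphic to the Nakajima varieties $\X(n_0,\dots,n_{b-1})$ rather than merely in bijection with them.
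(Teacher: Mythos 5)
Your argument is correct and is essentially the standard one: the paper itself offers no proof here, delegating to \cite{KononovSmirnov1}, and what you have written (hook lengths $\pm h(\square)$ as the $\bA$-weights of $T_\lambda\X$ giving $\mathsf{Res}(\X)=\bigcup_{b\le n}\tfrac1b\matZ$, combined with Theorem \ref{thm2} and self-duality for the walls, and the $\matZ/b$-isotypic decomposition of the ADHM data for $\Y_{\sl}$) is exactly the computation that reference carries out. The only point worth noting is that the first bullet can also be obtained directly from the definition of $\mathsf{Walls}(\X)$ via affine roots $([C],n)$, using that the $\bT$-equivariant rational chains in $\mathrm{Hilb}^n(\matC^2)$ have classes $m[C_0]$ with $1\le m\le n$, which avoids invoking mirror symmetry; your route through $\mathsf{Res}(\X^{!})$ is equally valid within the paper's framework.
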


\begin{figure}[H]
	\centering
	\includegraphics[width=4cm]{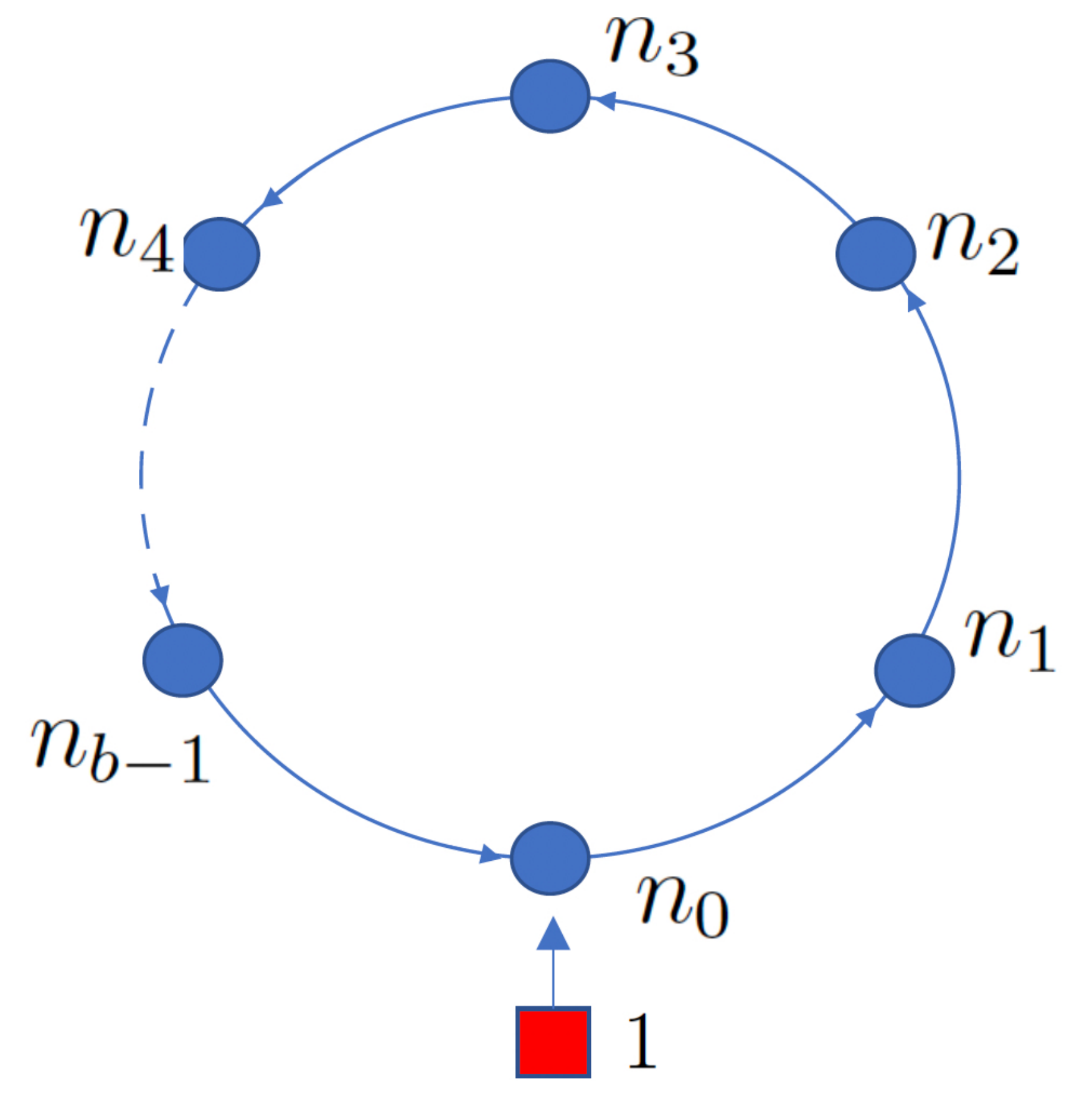}
	\caption{\label{cycl} The quiver defining the Nakajima variety $\X(n_0,\dots,n_{b-1})$.}
\end{figure}
\noindent

\subsection{} 

From the representation theoretic standpoint, the space 
\be \label{fockmod}
\mathsf{Fock}:= \bigoplus_{n=0}^{\infty} K_{\bT}(\Y_{\sl})  \ \ =\bigoplus_{n_0,\dots, n_{b-1}=0}^{\infty} K_{\bT}(\X(n_0,\dots, n_{b-1}))
\ee
is equipped with a natural action of the quantum affine algebra $\mathscr{U}_{\hbar}(\widehat{\frak{gl}}_{b})$ \cite{Nak1}. The $K$-theoretic stable envelope bases of $\X(n_0,\dots, n_{b-1})$ with small ample and anti-ample slopes correspond to the so-called global {\it standard} and {\it co-standard bases} of the Fock module \cite{Neg}.
Theorem \ref{stheor} then gives: 
\begin{Theorem}
{\it Let $\sl=\frac{a}{b} \in \mathsf{Walls}(\X)$, then under isomorphism (\ref{fockmod}) the $K$-theoretic duality interface $\ms_{\mf_\sl}$ maps the standard
and co-standard bases of the Fock $\mathscr{U}_{\hbar}(\widehat{\frak{gl}}_{b})$-module to the stable bases of $\X$:
$$
\begin{array}{cc}
\ms^{t}_{\mf_\sl}: \textrm{standard basis of $\mathscr{U}_{\hbar}(\widehat{\frak{gl}}_{b})$ $\mathsf{Fock}$ module } \longrightarrow  \hh_p \,\Stab^{[\sl+\varepsilon],\X,{K}}_{\sigma}(p)\\
\\
\ms^{t}_{\mf_\sl}: \textrm{co-standard basis of $\mathscr{U}_{\hbar}(\widehat{\frak{gl}}_{b})$ $\mathsf{Fock}$ module } \longrightarrow  \hh_p \, \Stab^{[\sl-\varepsilon],\X,{K}}_{\sigma}(p)\
\end{array}
$$
where $\varepsilon$ denotes small ample slope and
$\hh_p$ is the monomial
\be \label{diamh}
\hh_p=(-1)^{\gamma_p} \hbar^{-m_p(\sl)/2} \aa^{\chi_{p}(\sl,\cdot)}.
\ee
}
\end{Theorem}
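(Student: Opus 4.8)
The plan is to obtain the statement as a direct specialization of Theorem~\ref{stheor} to the self-dual case $\X=\mathrm{Hilb}^{n}(\matC^2)$, after importing one purely representation-theoretic input: the identification of the $K$-theoretic stable bases of the cyclic-quiver components $\X(n_0,\dots,n_{b-1})$ of $\Y_{\sl}$ with the distinguished bases of the Fock module. First I would recall, following \cite{Nak1} and \cite{Neg}, that the geometric $\mathscr{U}_{\hbar}(\widehat{\frak{gl}}_{b})$-action makes the direct sum (\ref{fockmod}) into the level-one Fock representation, and that under this isomorphism the two stable bases $\Stab^{\fD_{\pm}(\Y_{\sl}),\Y_{\sl},{K}}_{\sigma^{!}}(p^{!})$ attached to the small ample and anti-ample chambers $\fD_{\pm}(\Y_{\sl})$ are the global \emph{standard} and \emph{co-standard} bases of the Fock module, the precise matching of $\pm$ with standard/co-standard being fixed by the conventions of \cite{Neg}. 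This is the only external ingredient; everything else is geometry already established in the preceding sections.

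Next I would feed these bases into the explicit formula for $\ms^{t}_{\mf_\sl}$ from the last item of Theorem~\ref{stheor}, namely
$$
\ms^{t}_{\mf_\sl}\big(\Stab^{[\pm \varepsilon],\Y_{\sl},{K}}_{\sigma^{!}}(p^{!})\big)=(-1)^{\gamma_p(\sl)}\, \aa^{\chi_{p}(\sl,\cdot)}\, \hbar^{m_{p}(\sl)/2}\; \Stab^{[\sl\mp \varepsilon],\X,{K}}_{-\sigma}(p),
$$
where $[\pm\varepsilon]$ denotes a slope in $\fD_{\pm}(\Y_{\sl})$. Reading off the two choices of sign shows that the standard basis is sent to a scalar multiple of $\Stab^{[\sl+\varepsilon],\X,{K}}(p)$ and the co-standard basis to a scalar multiple of $\Stab^{[\sl-\varepsilon],\X,{K}}(p)$, exactly as in the two lines of the target statement; moreover the sign- and $\aa$-parts $(-1)^{\gamma_p(\sl)}\,\aa^{\chi_{p}(\sl,\cdot)}$ of the prefactor already agree with those of $\hh_p$ in (\ref{diamh}).

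It then remains to reconcile the two discrepancies between the output of Theorem~\ref{stheor} and the target: the stable envelope there is taken in the opposite chamber $-\sigma$ rather than $\sigma$, and the power of $\hbar$ appears as $\hbar^{+m_{p}(\sl)/2}$ rather than $\hbar^{-m_{p}(\sl)/2}$. I would handle both at once using the standard symmetry relating $K$-theoretic stable envelopes of opposite chambers: passing from $-\sigma$ to $\sigma$ interchanges the attracting and repelling normal weights at each fixed point and multiplies the diagonal restriction by an explicit monomial in $\hbar$ together with a sign, whose exponent is governed precisely by the index defect $m_{p}(\sl)$ and by $\gamma_{p}(\sl)=\mathrm{rk}(\ind_p-\ind^{\nn_\sl}_{p})$ entering (\ref{indexpart}). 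Tracking this monomial converts $\hbar^{+m_{p}(\sl)/2}\,\Stab^{[\cdot]}_{-\sigma}$ into $\hbar^{-m_{p}(\sl)/2}\,\Stab^{[\cdot]}_{\sigma}$, assembling the full prefactor into $\hh_p$.

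I expect this last convention-matching step to be the only real obstacle: the argument is bookkeeping, but genuinely delicate, since one must simultaneously track the orientation of the chambers $\fD_{\pm}$ under the self-duality $\X^{!}\cong\X$, the chamber flip $\sigma\leftrightarrow-\sigma$, and the half-integer powers of $\hbar$, and verify that all sign and $\hbar$ contributions collapse into the single monomial $\hh_p$ of (\ref{diamh}) with no residual factor. Once the normalizations are pinned down the theorem is immediate, and composing the two arrows recovers the identification of $\mathsf{R}^{\X}(\sl,\sigma)$ with the standard-to-co-standard transition matrix of the Fock module, in agreement with (\ref{rmatrel}) and Theorem~\ref{wallcrth}.
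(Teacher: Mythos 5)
Your proposal follows the paper's own route exactly: the paper derives this theorem as an immediate consequence of Theorem~\ref{stheor} combined with the identification (from \cite{Neg}) of the small ample and anti-ample stable bases of the cyclic-quiver components of $\Y_{\sl}$ with the standard and co-standard bases of the Fock module. Your additional third step --- explicitly reconciling the chamber flip $\sigma\leftrightarrow-\sigma$ and the sign of the exponent $m_p(\sl)/2$ between the output of Theorem~\ref{stheor} and the normalization $\hh_p$ of (\ref{diamh}) --- is bookkeeping the paper leaves implicit, and you are right to flag it as the only point requiring care.
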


\noindent
This result proves the main conjecture of  \cite{NegGor}. 

\begin{Remark}
The prefactor $\hh_p$ is computed in Section  4 of our previous paper \cite{KononovSmirnov1}. It coincides with the renormalization of stable basis suggested in Section 4.4 of~\cite{NegGor}.
\end{Remark}

\subsection{} 
Under the isomorphism of $\mathscr{U}_{\hbar}(\widehat{\frak{gl}}_{b})$-modules (\ref{fockmod}) the operator
$\mathsf{R}^{\Y_{\sl}}(0)^{-1}$
is the transition matrix from the standard to co-standard bases of the Fock module. Theorem \ref{wallcrth} thus gives:

\begin{Theorem} 
{\it The wall $R$-matrix $\mathsf{R}^{\X}(\sl)$ for $\X=\mathrm{Hilb}^{n}(\matC^{2})$ and $\sl = \frac{a}{b}$ coincides, up to a conjugation by the diagonal matrix (\ref{diamh}), with the matrix of transition from the standard to the co-standard basis in the Fock module of $\mathscr{U}_{\hbar}(\widehat{\frak{gl}}_{b})$.}
\end{Theorem}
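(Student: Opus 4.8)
The plan is to present the statement as the Hilbert-scheme specialization of the general wall-crossing duality in Theorem \ref{wallcrth}, fed into the Fock-space dictionary for the cyclic quiver varieties $\X(n_0,\dots,n_{b-1})$. First I would specialize Theorem \ref{wallcrth} to $\X=\mathrm{Hilb}^{n}(\matC^2)$ at the wall $\sl=\frac{a}{b}$. By the preceding Proposition the associated dual subvariety is $\Y_{\sl}=\coprod \X(n_0,\dots,n_{b-1})$, a disjoint union of Nakajima varieties for the cyclic quiver with $b$ vertices, and Theorem \ref{wallcrth} expresses $\mathsf{R}^{\X}(\sl,\sigma)$ as a conjugate of $\mathsf{R}^{\Y_{\sl}}(0,-\sigma^{!})^{-1}$ by an explicit diagonal operator assembled from $\aa^{\chi(\sl,\cdot)}$ and the index monomials of (\ref{indexpart}). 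It therefore remains to recognize the inner matrix $\mathsf{R}^{\Y_{\sl}}(0,-\sigma^{!})^{-1}$ representation-theoretically, and to match the conjugating diagonal with (\ref{diamh}).

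The core step, which I expect to be the main obstacle, is to identify $\mathsf{R}^{\Y_{\sl}}(0,-\sigma^{!})^{-1}$ with the transition matrix from the standard to the co-standard basis of the Fock module. For this I would invoke the two independent inputs underlying (\ref{fockmod}): the theorem of Nakajima \cite{Nak1} endowing $\bigoplus_n K_{\bT}(\Y_{\sl})$ with the level-one Fock action of $\mathscr{U}_{\hbar}(\widehat{\frak{gl}}_{b})$, and the result of Negut \cite{Neg} identifying the $K$-theoretic stable bases of $\X(n_0,\dots,n_{b-1})$ with small ample and small anti-ample slopes with the standard and co-standard bases of that module. Granting these, the wall $R$-matrix $\mathsf{R}^{\Y_{\sl}}(0,-\sigma^{!})$ is, by definition (\ref{Rw}), precisely the change of stable basis from the anti-ample alcove $\fD_{-}(\Y_{\sl})$ to the ample alcove $\fD_{+}(\Y_{\sl})$, so its inverse is the desired standard-to-co-standard transition $S$. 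The delicate point is that these two structures---the geometric stable envelopes and the algebraic (co)standard bases---are defined by entirely different means, so one must carefully align the chamber conventions ($\fD_{+}\leftrightarrow$ standard, $\fD_{-}\leftrightarrow$ co-standard) and verify that the $\mathscr{U}_{\hbar}(\widehat{\frak{gl}}_{b})$-action of \cite{Nak1} is exactly the one for which \cite{Neg} applies.

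Finally I would pin down the conjugating diagonal, and the cleanest route is through Theorem \ref{stheor} rather than directly through the diagonal displayed in Theorem \ref{wallcrth}. Applying $\ms^{t}_{\mf_{\sl}}$ to the defining relation $\Stab^{[\sl+\varepsilon],\X,{K}}_{\sigma}=\Stab^{[\sl-\varepsilon],\X,{K}}_{\sigma}\,\mathsf{R}^{\X}(\sl,\sigma)$, and using that $\ms^{t}_{\mf_{\sl}}$ carries the standard and co-standard vectors to $\hh_p\,\Stab^{[\sl+\varepsilon],\X,{K}}_{\sigma}(p)$ and $\hh_p\,\Stab^{[\sl-\varepsilon],\X,{K}}_{\sigma}(p)$ respectively, yields $\mathsf{R}^{\X}(\sl,\sigma)=\mathrm{diag}(\hh_p)\,S\,\mathrm{diag}(\hh_p)^{-1}$ with $S$ the standard-to-co-standard transition matrix, which is exactly the assertion. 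Here $\hh_p=(-1)^{\gamma_p}\hbar^{-m_{p}(\sl)/2}\aa^{\chi_{p}(\sl,\cdot)}$ is the monomial of (\ref{diamh}). The only remaining points---the sign $(-1)^{\gamma_p}$ and the half-integer power of $\hbar$, which also reconcile this diagonal with the one emitted by Theorem \ref{wallcrth}---are routine bookkeeping fixed by (\ref{indexpart}) and the prefactor computation of \cite{KononovSmirnov1}.
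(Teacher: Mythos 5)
Your proposal is correct and follows essentially the same route as the paper: the paper likewise obtains the statement by combining Theorem \ref{wallcrth} (itself proved by applying the duality correspondence of Theorem \ref{stheor} to the defining relation of the wall $R$-matrix) with the identification, via \cite{Nak1} and \cite{Neg}, of the small ample and anti-ample stable bases of $\Y_{\sl}$ with the standard and co-standard bases of the Fock module, so that $\mathsf{R}^{\Y_{\sl}}(0)^{-1}$ is the standard-to-co-standard transition matrix and the conjugating diagonal is the monomial of (\ref{diamh}).
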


\begin{Remark}
The above results imply that for a rational number $\sl=\frac{a}{b}$ there exists an action of
$\mathscr{U}_{\hbar}(\widehat{\frak{gl}}_{b})$ on
$$
\bigoplus\limits_{n=0}^{\infty} \, K_{\bT}(\mathrm{Hilb}^{n}(\matC^2))
$$
thus confirming the prediction of \cite{NegGor}. This  action may also be studied by non-geometric representation theoretic techniques see \cite{BerGon} for recent advances.
\end{Remark}

\bibliographystyle{abbrv}
\bibliography{bib}

\newpage

\vspace{12 mm}

\noindent
Yakov Kononov\\
Department of Mathematics,\\
Columbia University,\\
New York, NY 10027, USA\\
ya.kononoff@gmail.com

\vspace{3 mm}

\noindent
Andrey Smirnov\\
Department of Mathematics,\\
University of North Carolina at Chapel Hill,\\
Chapel Hill, NC 27599-3250, USA;\\
Steklov Mathematical Institute\\
of Russian Academy of Sciences,\\
Gubkina str. 8, Moscow, 119991, Russia.\\
asmirnov@email.unc.edu\\

\end{document}